\newtheorem{thm}{Theorem}[section]
\newtheorem{cor}[thm]{Corollary}
\newtheorem{lem}[thm]{Lemma}
\numberwithin{equation}{section}
\journal{}
\begin{document}
\begin{spacing}{1.15}
\begin{frontmatter}
\title{Lexicographical ordering of hypergraphs via spectral moments}

\author{Hong Zhou}
\author{Changjiang Bu}\ead{buchangjiang@hrbeu.edu.cn}
\address{College of Mathematical Sciences, Harbin Engineering University, Harbin 150001, PR China}

\begin{abstract}
The lexicographical ordering  of hypergraphs via spectral moments is called the $S$-order of hypergraphs.
In this paper, the $S$-order of hypergraphs is investigated.
We characterize the first and last hypergraphs in an $S$-order of all uniform hypertrees and all linear unicyclic uniform
hypergraphs with given girth, respectively. And we give the last hypergraph in an $S$-order of  all linear unicyclic uniform  hypergraphs.


\end{abstract}

\begin{keyword}
hypergraph, spectral moment, adjacency tensor\\
\emph{AMS classification (2020):}
05C65, 15A18
\end{keyword}
\end{frontmatter}

\section{Introduction}
Let $G$ be a simple undirected graph with $n$ vertices and $A$ be the adjacency matrix of $G$.
The \textit{$d$th order spectral moment} of $G$ is the sum of $d$ powers of all the eigenvalues of $A$,  denoted by $\mathrm{S}_{d}(G)$ \cite{1980Spectra}.
For two graphs $G_{1}, G_{2}$ with $n$ vertices, if $\mathrm{S}_{i}(G_{1})=\mathrm{S}_{i}(G_{2})$ for $i=0,1,2,\ldots,n-1$, then adjacency matrices of $G_{1}$ and $G_{2}$ have the same spectrum.  Therefore, $\mathrm{S}_{i}(G_{1})=\mathrm{S}_{i}(G_{2})$ for $i=0,1,2,\ldots$.
We write $G_{1}\prec_{s} G_{2}$ ($G_{1}$ comes before $G_{2}$ in an $S$-order) if there exists a $k\in\{1,2,\ldots,n-1\}$ such that $\mathrm{S}_{i}(G_{1})=\mathrm{S}_{i}(G_{2})$ for $i=0,1,2,\ldots,k-1$ and
$\mathrm{S}_{k}(G_{1})<\mathrm{S}_{k}(G_{2})$. We write $G_{1}=_{s} G_{2}$, if $\mathrm{S}_{i}(G_{1})=\mathrm{S}_{i}(G_{2})$ for $i=0,1,2,\ldots,n-1$.


In 1987, Cvetkovi\'{c} and Rowlinson \cite{cvetkovic1987spectra} characterized the first and last graphs in an $S$-order of all trees and all unicyclic
graphs with given girth, respectively.
Other works on the $S$-order of graphs can be referred to \cite{WU20101707, PAN20111265, CHENG20121123,CHENG2012858,SHUCHAO2013ON,10.1216/RMJ-2016-46-1-261}.
The $S$-order of graphs had been used in producing graph catalogues \cite{Drago1984A}.

In this paper, the $S$-order of hypergraphs is defined.
We characterize the first and last hypergraphs in an $S$-order of all uniform hypertrees and all linear unicyclic uniform
hypergraphs with given girth, respectively.  And we give the last hypergraph in an $S$-order of  all linear unicyclic uniform  hypergraphs.


Next, we introduce some notations and concepts for tensors and hypergraphs. For a positive integer $n$, let $[n]=\{1,2,\ldots,n\}$. An $m$-order $n$-dimension complex \textit{tensor}
$
\mathcal{A}=\left( {a_{i_{1}\cdots i_{m}} } \right)
$
is a multidimensional array with $n^m$ entries on complex number field $\mathbb{C}$, where $i_{j}\in [n], j=1,\ldots,m$.

Let $\mathbb{C}^{n}$ be the set of $n$-dimension complex vectors and $\mathbb{C}^{[m,n]}$ be the set of $m$-order $n$-dimension complex tensors.
For $x=\left({x_1 ,\ldots ,x_n}\right)^\mathrm{T}\in\mathbb{C}^n$,  $\mathcal{A}x^{m-1}$ is a vector in $\mathbb{C}^n$ whose $i$th component is
\begin{align*}
(\mathcal{A}x^{m-1})_i=\sum\limits_{i_{2},\ldots,i_{m}=1}^{n}a_{ii_{2}\cdots i_{m}}x_{i_{2}}\cdots x_{i_{m}}.
\end{align*}
A number $\lambda\in\mathbb{C}$ is called an \textit{eigenvalue} of $\mathcal{A}$ if there exists a nonzero vector $x\in\mathbb{C}^n$ such that
$$\mathcal{A}x^{m-1}=\lambda x^{[m-1]},$$
where $x^{\left[ {m - 1} \right]}  = \left( {x_1^{m - 1} ,\ldots,x_n^{m - 1} } \right)^\mathrm{T}$.
The number of eigenvalues of $\mathcal{A}$ is $n(m-1)^{n-1}$ \cite{qi2005eigenvalues,lim2005singular}.

A hypergraph $\mathcal{H}=(V(\mathcal{H}), E(\mathcal{H}))$ is called \textit{$m$-uniform} if $|e|=m\geq2$ for all $e\in E(\mathcal{H})$. For an $m$-uniform hypergraph $\mathcal{H}$ with $n$ vertices, its  \textit{adjacency tensor} is the order $m$ dimension $n$ tensor
$\mathcal{A}_\mathcal{H}=(a_{i_{1}i_{2}\cdots i_{m}})$, where
\begin{equation*}
a_{i_{1}i_{2}\cdots i_{m}}=\begin{cases}
\frac{1}{(m-1)!},& \text{if } \{i_{1},i_{2},\ldots,i_{m}\}\in E(\mathcal{H}),\notag \\
0,& \text{otherwise}.
\end{cases}
\end{equation*}
Clearly, $\mathcal{A}_\mathcal{H}$ is the adjacency matrix of $\mathcal{H}$ when $\mathcal{H}$ is $2$-uniform \cite{cooper2012spectra}.
The \textit{degree} of a vertex $v$ of $\mathcal{H}$ is the number of edges containing the vertex, denoted by $d_{\mathcal{H}}(v)$ or $d_{v}$.
A vertex of $\mathcal{H}$ is called a \textit{core vertex} if it has degree one.
An edge $e$ of $\mathcal{H}$ is called a \textit{pendent edge} if it contains $|e|-1$ core vertices.
Sometimes a core vertex in a pendent edge is also called a \textit{pendent vertex}.
The \textit{girth} of $\mathcal{H}$ is the minimum length of the hypercycles of $\mathcal{H}$, denoted by $g(\mathcal{H})$.
$\mathcal{H}$ is called \textit{linear} if any two different edges intersect into at most one vertex.
The \textit{$m$-power hypergraph} $G^{(m)}$ is the $m$-uniform hypergraph which obtained by adding $m-2$ vertices with degree one to each edge of the graph $G$.

In 2005, the concept of eigenvalues of tensors was proposed by Qi \cite{qi2005eigenvalues} and Lim \cite{lim2005singular}, independently.
The eigenvalues of tensors and related problems are important research topics of spectral hypergraph theories \cite{cooper2,doi:10.1137/21M1404740,2017Tensor,clark2021harary}, especially the trace of tensors \cite{clark2021harary,2011Analogue,hu2013determinants,shao2015some,doi:10.1080/03081087.2021.1953431}.



Morozov and Shakirov  gave an expression of the $d$th order trace $\mathrm{Tr}_{d}(\mathcal{A})$ of a tensor $\mathcal{A}$ \cite{2011Analogue}. Hu et al. proved that $\mathrm{Tr}_{d}(\mathcal{A})$ is equal to the sum of $d$ powers of all eigenvalues of $\mathcal{A}$ \cite{hu2013determinants}. For a uniform hypergraph $\mathcal{H}$, the sum of $d$ powers of all eigenvalues of $\mathcal{A}_\mathcal{H}$ is called the \textit{$d$th order  spectral moment} of $\mathcal{H}$, denoted by $\mathrm{S}_{d}(\mathcal{H})$. Then
$\mathrm{Tr}_{d}(\mathcal{\mathcal{A}_\mathcal{H}})=\mathrm{S}_{d}(\mathcal{H})$. Shao et al. established some formulas for the $d$th order trace of tensors in terms of some graph parameters \cite{shao2015some}.
Clark and Cooper expressed the spectral moments of
hypergraphs by the number of Veblen multi-hypergraphs and used this result to give the ``Harary-Sachs'' coefficient theorem for hypergraphs \cite{clark2021harary}.
Chen et al. gave a formula for the spectral moment of a hypertree in terms of the number of some sub-hypertrees \cite{doi:10.1080/03081087.2021.1953431}.

This paper is organized as follows. In Section 2, the $S$-order of hypergraphs is defined. We introduce $4$ operations of moving edges on hypergraphs and give changes of the Zagreb index after operations of moving edges. In Section 3, we give the first and last hypergraphs in an $S$-order of all uniform hypertrees. In Section 4, the expressions of $2m$th and $3m$th order spectral moments of linear unicyclic $m$-uniform  hypergraphs are
obtained in terms of the number of sub-hypergraphs. We characterize the first and last hypergraphs in an $S$-order of  all linear unicyclic uniform  hypergraphs with given girth. And we give the last hypergraph in an $S$-order of  all linear unicyclic uniform  hypergraphs.

\section{Preliminaries}

For two $m$-uniform hypergaphs $\mathcal{H}_{1},\mathcal{H}_{2}$ with $n$ vertices,
if $\mathrm{S}_{i}(\mathcal{H}_{1})=\mathrm{S}_{i}(\mathcal{H}_{2})$ for $i=0,1,2,\ldots,n(m-1)^{n-1}-1$, then adjacency tensors of $\mathcal{H}_{1}$ and $\mathcal{H}_{2}$  have the same spectrum. Therefore, $\mathrm{S}_{i}(\mathcal{H}_{1})=\mathrm{S}_{i}(\mathcal{H}_{2})$ for $i=0,1,2,\ldots$.
We write $\mathcal{H}_{1}\prec_{s} \mathcal{H}_{2}$ ($\mathcal{H}_{1}$ comes before $\mathcal{H}_{2}$ in an $S$-order) if there exists a $k\in\{1,2,\ldots,n(m-1)^{n-1}-1\}$ such that $\mathrm{S}_{i}(\mathcal{H}_{1})=\mathrm{S}_{i}(\mathcal{H}_{2})$ for $i=0,1,2,\ldots,k-1$ and
$\mathrm{S}_{k}(\mathcal{H}_{1})<\mathrm{S}_{k}(\mathcal{H}_{2})$. 
We write $\mathcal{H}_{1}=_{s} \mathcal{H}_{2}$ if $\mathrm{S}_{i}(\mathcal{H}_{1})=\mathrm{S}_{i}(\mathcal{H}_{2})$ for $i=0,1,2,\ldots,n(m-1)^{n-1}-1$.
In this paper, $\mathrm{S}_i(\mathcal{H})$ is also written $\mathrm{S}_i, i=0,1,2,\ldots$. Let $\textbf{H}_{1}$ and $\textbf{H}_{2}$ be two sets of hypergraphs. We write $\textbf{H}_{1}\prec_{s}\textbf{H}_{2}$ ($\textbf{H}_{1}$  comes before $\textbf{H}_{2}$ in an $S$-order) if $\mathcal{H}_{1}\prec_{s}\mathcal{H}_{2}$ for each $\mathcal{H}_{1}\in \textbf{H}_{1}$ and each $\mathcal{H}_{2} \in \textbf{H}_{2}$.

For an $m$-uniform hypergraph $\mathcal{H}$ with $n$ vertices, let $\mathrm{S}_{0}(\mathcal{H})=n(m-1)^{n-1}.$
In \cite{cooper2012spectra}, the $d$th order traces of the adjacency tensor of an $m$-uniform hypergraph were given for $d=1,2,\ldots,m$.
\begin{lem}\cite{cooper2012spectra}\label{L2}
Let $\mathcal{H}$ be an $m$-uniform hypergraph with $n$ vertices and $q$ edges. Then

(1) $\mathrm{Tr}_d(\mathcal{A}_\mathcal{H})=0$ for $d=1,2,\ldots, m-1$;

(2) $\mathrm{Tr}_{m}(\mathcal{A}_\mathcal{H})=qm^{m-1}(m-1)^{n-m}$.

\end{lem}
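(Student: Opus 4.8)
The plan is to obtain both statements from the combinatorial trace formula of Morozov and Shakirov \cite{2011Analogue}, which Hu et al. \cite{hu2013determinants} proved computes $\mathrm{Tr}_d(\mathcal{A})=\sum_i\lambda_i^d$, together with its hypergraph interpretation as developed by Shao et al. \cite{shao2015some} and Clark and Cooper \cite{clark2021harary}. In that language $\mathrm{Tr}_d(\mathcal{A}_{\mathcal{H}})$ is a weighted enumeration of closed branching walks, equivalently of Veblen-type sub-multi-hypergraphs, built from $d$ edge-slots of $\mathcal{H}$: each slot carries the weight $\frac{1}{(m-1)!}$ inherited from the nonzero entries of $\mathcal{A}_{\mathcal{H}}$, the count is weighted by the nonnegative Morozov--Shakirov coefficients, and the vertices left untouched by a configuration contribute an aggregate factor that is a power of $m-1$. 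The first step is to record this formula in a shape where the contributing terms are precisely the connected closed configurations on $d$ edge-slots meeting the degree-balance (closure) condition.

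For part (1) I would show that no such configuration exists once $1\le d\le m-1$. Since $\mathcal{H}$ is $m$-uniform, every edge-slot covers $m$ vertices, and the balance condition that lets a branching walk close off cannot be met unless the configuration spans at least one entire edge; uniformity then forces any nonzero closed configuration to consume at least $m$ edge-slots. Hence every term contributing to $\mathrm{Tr}_d(\mathcal{A}_{\mathcal{H}})$ vanishes for $d<m$, which is (1). The specialization $m=2$ is the familiar fact that a loopless graph satisfies $\mathrm{Tr}_1(A)=0$.

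For part (2), at the threshold $d=m$ only the minimal configurations survive, namely those supported on a single edge $e=\{v_1,\ldots,v_m\}$ and meeting all $m$ of its vertices; two distinct edges cannot be joined into a closed configuration of size $m$. It then remains to count, for a fixed edge, the weighted number of such minimal closed structures. The natural claim is that these structures are in bijection with the rooted labelled trees on the $m$ vertices of $e$, of which there are $m^{m-1}$ by Cayley's formula, that the product of the slot-weights $\frac{1}{(m-1)!}$ cancels exactly against the Morozov--Shakirov coefficients so that each tree carries net weight $1$, and that the $n-m$ untouched vertices contribute $(m-1)^{n-m}$. Summing $m^{m-1}(m-1)^{n-m}$ over the $q$ edges would give $\mathrm{Tr}_m(\mathcal{A}_{\mathcal{H}})=q\,m^{m-1}(m-1)^{n-m}$.

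The hard part will be the bookkeeping in part (2): proving rigorously that only single-edge configurations occur at $d=m$ and that the weighted count per edge is exactly $m^{m-1}(m-1)^{n-m}$. This is where the precise form of the Morozov--Shakirov coefficients is essential, and the crux is the Cayley-type enumeration of the closed structures on one edge together with the cancellation of the $(m-1)!$ factors; checking the outcome against the matrix value $\mathrm{Tr}_2(A)=2q$ when $m=2$ gives a reassuring consistency test.
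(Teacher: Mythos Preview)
The paper does not prove this lemma: it is quoted verbatim from Cooper and Dutle \cite{cooper2012spectra} and stated without proof, so there is no argument in the paper to compare your proposal against.

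That said, your sketch is a sound route to an independent proof, and it is in the spirit of the later machinery the paper itself relies on (the Morozov--Shakirov trace formula via \cite{2011Analogue,hu2013determinants}, and the Veblen/sub-hypergraph expansion of \cite{shao2015some,clark2021harary}). Your argument for part~(1) can be tightened as follows: in the Clark--Cooper expansion the nonzero contributions to $\mathrm{Tr}_d(\mathcal{A}_{\mathcal{H}})$ come from $m$-valent multi-hypergraphs on $d$ edge-slots; if such an object touches $k$ vertices then each has degree a positive multiple of $m$, so the total degree $dm$ satisfies $dm\ge km$, while any single $m$-uniform edge already forces $k\ge m$; hence $d\ge m$ and nothing survives for $d\le m-1$. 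For part~(2) the same inequality forces $k=m$ and every touched vertex to have degree exactly $m$, so the only contributing object is the single edge $e$ taken with multiplicity $m$; the residual task is then purely the evaluation of the Morozov--Shakirov coefficient for that one configuration. Your appeal to ``rooted labelled trees on $m$ vertices'' and Cayley's formula is heuristic---the honest computation goes through the explicit operator formula or through the arborescence count in the rooted-walk interpretation---but it does produce the correct factor $m^{m-1}$, and the untouched-vertex factor $(m-1)^{n-m}$ is exactly the standard normalization in the trace formula. The $m=2$ sanity check you mention is a good one.
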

Next, we introduce $4$ operations of moving edges on hypergraphs and give changes of the Zagreb index after operations of moving edges.
The sum of the squares of the degrees of all vertices of a hypergraph $\mathcal{H}$ is called the \textit{Zagreb index} of $\mathcal{H}$, denoted by $M(\mathcal{H})$ \cite{Kau2020Energies}.
Let $E'\subseteq E(\mathcal{H})$, we denote by $\mathcal{H}-E'$ the sub-hypergraph of $\mathcal{H}$ obtained by deleting the edges of $E'$.

$\textbf{Transformation 1}$: Let $e=\{u,v,v_{1},v_{2},\ldots,v_{m-2}\}$ be an edge of an $m$-uniform hypergraph $\mathcal{H}$, $e_{1},e_{2},\ldots,e_{t}$ be the pendent edges incident with $u$, where $t\geq 1$, $d_{\mathcal{H}}(u)=t+1$ and $d_{\mathcal{H}}(v)\geq 2$. Write $e_{i}^{'}=(e_{i}\setminus \{u\})\bigcup\{v\}$. Let $\mathcal{H}^{'}=\mathcal{H}-\{e_{1},\ldots,e_{t}\}+\{e'_{1},\ldots,e'_{t}\}$.
\begin{lem}\label{sp1}
Let $\mathcal{H}'$ be obtained from $\mathcal{H}$ by transformation 1. Then $M(\mathcal{H}')>M(\mathcal{H})$.
\end{lem}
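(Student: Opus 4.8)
The plan is to observe that Transformation 1 changes the degree of only two vertices, namely $u$ and $v$, and then to compute the resulting change in the Zagreb index directly from its definition. First I would settle the degree bookkeeping. In $\mathcal{H}$ the vertex $u$ lies in exactly the edges $e, e_{1}, \ldots, e_{t}$, so $d_{\mathcal{H}}(u) = t+1$; after each pendent edge $e_{i}$ is replaced by $e_{i}' = (e_{i}\setminus\{u\})\cup\{v\}$, the vertex $u$ is removed from all of these and remains only in $e$, giving $d_{\mathcal{H}'}(u) = 1$. The vertex $v$ gains membership in each of the $t$ new edges $e_{1}', \ldots, e_{t}'$; here I would note that $v$ belonged to none of the original $e_{i}$, since each $e_{i}$ is a pendent edge whose only non-core vertex is $u$ while $v$, having degree at least $2$, is not a core vertex. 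Hence $d_{\mathcal{H}'}(v) = d_{\mathcal{H}}(v) + t$. Every remaining vertex keeps its degree: in particular the $m-1$ core vertices of each $e_{i}$ simply relocate into $e_{i}'$ while retaining degree one, and the vertices $v_{1}, \ldots, v_{m-2}$ of $e$ are untouched.

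With this in hand, only the terms for $u$ and $v$ survive in the difference of the two Zagreb indices. Writing $d_{v} = d_{\mathcal{H}}(v)$, I would compute
\begin{align*}
M(\mathcal{H}') - M(\mathcal{H}) &= \big(1^{2} + (d_{v}+t)^{2}\big) - \big((t+1)^{2} + d_{v}^{2}\big) \\
&= 2t(d_{v}-1).
\end{align*}
Since the hypotheses supply $t \geq 1$ and $d_{\mathcal{H}}(v) \geq 2$, the right-hand side is at least $2 > 0$, which gives $M(\mathcal{H}') > M(\mathcal{H})$ and completes the argument.

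There is no genuine obstacle in this lemma; it reduces to a one-line algebraic identity once the degrees are tracked correctly. The only point requiring care is precisely that bookkeeping, specifically confirming that $v$ does not already occur in any of the pendent edges $e_{i}$ and that the relocated core vertices contribute nothing new to $M$, so that $u$ and $v$ are the only two vertices whose squared-degree contribution to the Zagreb index changes under the transformation.
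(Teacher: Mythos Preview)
Your proof is correct and follows essentially the same approach as the paper: both compute $M(\mathcal{H}')-M(\mathcal{H})$ directly from the definition, using $d_{\mathcal{H}'}(u)=1$ and $d_{\mathcal{H}'}(v)=d_{\mathcal{H}}(v)+t$, and arrive at the same expression $2t(d_{\mathcal{H}}(v)-1)>0$. Your version is slightly more detailed in justifying why only $u$ and $v$ change degree, but the argument is otherwise identical.
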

\begin{proof}
By the definition of the Zagreb index, we have
\begin{align*}
M(\mathcal{H}')-M(\mathcal{H})&=d^{2}_{\mathcal{H}'}(v)-d^{2}_{\mathcal{H}}(v)+d^{2}_{\mathcal{H}'}(u)-d^{2}_{\mathcal{H}}(u)\\
&=(d_{\mathcal{H}}(v)+t)^{2}-d_{\mathcal{H}}^{2}(v)+1-(t+1)^{2}\\
&=2t(d_{\mathcal{H}}(v)-1)>0.
\end{align*}
\end{proof}
$\textbf{Transformation 2}$: Let $u$ and $v$ be two vertices in a uniform hypergraph $\mathcal{H}$, $e_{1},e_{2},\ldots,e_{r}$ be the pendent edges incident with $u$ and $e_{r+1},e_{r+2},\ldots,e_{r+t}$ be the pendent edges incident with $v$, where $r\geq 1$ and $t\geq 1$.  Write $e_{i}^{'}=(e_{i}\setminus \{u\})\bigcup\{v\}, i\in[r]$, $e_{i}^{'}=(e_{i}\setminus \{v\})\bigcup\{u\}, i=r+1,\ldots,r+t$. If $d_{\mathcal{H}}(v)\geq d_{\mathcal{H}}(u)$, let  $\mathcal{H}'=\mathcal{H}-\{e_{1},\ldots,e_{r}\}+\{e'_{1},\ldots,e'_{r}\}$. If $d_{\mathcal{H}}(v)<d_{\mathcal{H}}(u)$, let $\mathcal{H}'=\mathcal{H}-\{e_{r+1},\ldots,e_{r+t}\}+\{e'_{r+1},\ldots,e'_{r+t}\}$.
\begin{lem}\label{sp2}
Let $\mathcal{H}'$ be obtained from $\mathcal{H}$ by transformation 2. Then $M(\mathcal{H}')>M(\mathcal{H})$.
\end{lem}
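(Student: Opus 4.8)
The plan is to mimic the proof of Lemma~\ref{sp1}: compute $M(\mathcal{H}')-M(\mathcal{H})$ directly, after first pinning down which vertex degrees actually change under Transformation~2. The key preliminary observation is that relocating a pendent edge leaves every degree untouched except those of $u$ and $v$. Indeed, for each moved edge $e_i$ the rewriting $e_i'=(e_i\setminus\{u\})\cup\{v\}$ (or the symmetric rewriting toward $u$) keeps the $m-2$ core vertices of $e_i$ intact, so those vertices remain of degree one; only the endpoint that loses edges and the one that gains them have their degrees altered. Hence the Zagreb index difference reduces to the two terms in $d_{\mathcal{H}}^{2}(u)$ and $d_{\mathcal{H}}^{2}(v)$.

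I would then split into the two cases of the definition. If $d_{\mathcal{H}}(v)\ge d_{\mathcal{H}}(u)$, the $r$ pendent edges at $u$ are moved to $v$, so $d_{\mathcal{H}'}(u)=d_{\mathcal{H}}(u)-r$ and $d_{\mathcal{H}'}(v)=d_{\mathcal{H}}(v)+r$, giving
\begin{align*}
M(\mathcal{H}')-M(\mathcal{H})&=(d_{\mathcal{H}}(v)+r)^{2}-d_{\mathcal{H}}^{2}(v)+(d_{\mathcal{H}}(u)-r)^{2}-d_{\mathcal{H}}^{2}(u)\\
&=2r\bigl(r+d_{\mathcal{H}}(v)-d_{\mathcal{H}}(u)\bigr),
\end{align*}
which is positive since $r\ge1$ and $d_{\mathcal{H}}(v)-d_{\mathcal{H}}(u)\ge0$. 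If $d_{\mathcal{H}}(v)<d_{\mathcal{H}}(u)$, the $t$ pendent edges at $v$ are moved to $u$, and the symmetric computation yields $M(\mathcal{H}')-M(\mathcal{H})=2t\bigl(t+d_{\mathcal{H}}(u)-d_{\mathcal{H}}(v)\bigr)>0$, using $t\ge1$ and $d_{\mathcal{H}}(u)-d_{\mathcal{H}}(v)\ge1$.

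There is no substantive obstacle here; the only point requiring care is the degree-bookkeeping observation above, namely that the moved edges stay pendent and their core vertices neither coincide with $u$ or $v$ nor acquire extra incidences (guaranteed because pendent edges meet the rest of $\mathcal{H}$ only at their single non-core vertex). Once that is settled, the conclusion is a one-line algebraic identity in each case, exactly parallel to the proof of Lemma~\ref{sp1}.
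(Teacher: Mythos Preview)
Your proposal is correct and follows essentially the same approach as the paper: split into the two cases of Transformation~2 and compute $M(\mathcal{H}')-M(\mathcal{H})$ directly from the degree changes at $u$ and $v$, obtaining $2r(r+d_{\mathcal{H}}(v)-d_{\mathcal{H}}(u))>0$ and $2t(t+d_{\mathcal{H}}(u)-d_{\mathcal{H}}(v))>0$ respectively. Your version is slightly more explicit about why only the degrees of $u$ and $v$ change, but the argument is otherwise identical.
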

\begin{proof}
By the definition of the Zagreb index, if $d_{\mathcal{H}}(v)\geq d_{\mathcal{H}}(u)$, we have
\begin{align*}
M(\mathcal{H}')-M(\mathcal{H})&=d^{2}_{\mathcal{H}'}(v)-d^{2}_{\mathcal{H}}(v)+d^{2}_{\mathcal{H}'}(u)-d^{2}_{\mathcal{H}}(u)\\
&=(d_{\mathcal{H}}(v)+r)^{2}-d_{\mathcal{H}}^{2}(v)+(d_{\mathcal{H}}(u)-r)^{2}-d_{\mathcal{H}}^{2}(u)\\
&=2r(r+d_{\mathcal{H}}(v)-d_{\mathcal{H}}(u))>0.
\end{align*}
If $d_{\mathcal{H}}(v)<d_{\mathcal{H}}(u)$, we have
\begin{align*}
M(\mathcal{H}')-M(\mathcal{H})&=d^{2}_{\mathcal{H}'}(v)-d^{2}_{\mathcal{H}}(v)+d^{2}_{\mathcal{H}'}(u)-d^{2}_{\mathcal{H}}(u)\\
&=(d_{\mathcal{H}}(v)-t)^{2}-d_{\mathcal{H}}^{2}(v)+(d_{\mathcal{H}}(u)+t)^{2}-d_{\mathcal{H}}^{2}(u)\\
&=2t(t+d_{\mathcal{H}}(u)-d_{\mathcal{H}}(v))>0.
\end{align*}
\end{proof}

The $m$-uniform hypertree with a maximum degree of less than or equal to $2$ is called the \textit{binary $m$-uniform hypertree}.
For two vertices $u,v$ of  an $m$-uniform  hypergraph $\mathcal{H}$, the \textit{distance} between $u$ and $v$  is the length of a shortest  path from $u$ to $v$, denoted by $d_{\mathcal{H}}(u, v)$ \cite{LIN2016564}. Let $d_{\mathcal{H}}(u, u)=0$. Let $\mathcal{H}_{0},\mathcal{H}_{1},\ldots,\mathcal{H}_{p}$ be pairwise disjoint connected hypergraphs with $v_{1},\ldots,v_{p}\in V(\mathcal{H}_{0})$ and $u_{i}\in V(\mathcal{H}_{i})$ for each $i\in[p]$, where $p\geq 1$.
Denote by $\mathcal{H}_{0}(v_{1},\ldots,v_{p})\bigodot(\mathcal{H}_{1}(u_{1}),\ldots,\mathcal{H}_{p}(u_{p}))$ the hypergraph
obtained from $\mathcal{H}_{0}$ by attaching $\mathcal{H}_{1},\ldots,\mathcal{H}_{p}$ to $\mathcal{H}_{0}$ with $u_{i}$ identified with $v_{i}$ for each
$i\in[p]$ \cite{FAN202389}.
Let $P_{q}$ be a path of length $q$.

$\textbf{Transformation 3}$: Let $\mathcal{H}\neq P_{0}^{(m)}$ be an $m$-uniform connected  hypergraph with $u\in{V(\mathcal{H})}$. Let $\mathcal{T}$ be a binary $m$-uniform hypertree with $v_{k}, v_{n}, u_{1}, u_{2} \in V(\mathcal{T})$ and $e_{k}, e_{k+1}\in E(\mathcal{T})$ such that $d_{\mathcal{T}}(v_{k})=2$, $v_{k}, u_{1}\in e_{k}, v_{k}, u_{2}\in e_{k+1}$, $u_{1},  u_{2}\neq v_{k}$, $v_{n}$ be a pendent vertex and $d_{\mathcal{T}}(u_{1},v_{n})>d_{\mathcal{T}}(u_{2},v_{n})$. Let $\mathcal{H}_{1}=\mathcal{H}(u)\bigodot\mathcal{T}(v_{k})$.  $\mathcal{H}_{2}$ is obtained from $\mathcal{H}_{1}$ by deleting $e_{k}$ and adding $(e_{k}\setminus \{v_{k}\})\bigcup\{v_{n}\}$.
\begin{lem}\label{sp4}
Let $\mathcal{H}_{2}$ be obtained from $\mathcal{H}_{1}$ by transformation 3. Then $M(\mathcal{H}_{1})>M(\mathcal{H}_{2})$.
\end{lem}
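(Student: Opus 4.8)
The plan is to establish the inequality exactly as in Lemmas \ref{sp1} and \ref{sp2}, by isolating the few vertices whose degree is altered by transformation 3 and comparing the corresponding squared degrees in the Zagreb index. Passing from $\mathcal{H}_1$ to $\mathcal{H}_2$ deletes the edge $e_k$, which contains $v_k$, and inserts the edge $(e_k\setminus\{v_k\})\cup\{v_n\}$. Every vertex of $e_k\setminus\{v_k\}$ lies in both the deleted and the inserted edge, so its degree is unchanged; only $v_k$ (which loses one incident edge) and $v_n$ (which gains one) have their degrees moved. Hence
\begin{align*}
M(\mathcal{H}_1)-M(\mathcal{H}_2)=\big(d_{\mathcal{H}_1}^2(v_k)-d_{\mathcal{H}_2}^2(v_k)\big)+\big(d_{\mathcal{H}_1}^2(v_n)-d_{\mathcal{H}_2}^2(v_n)\big).
\end{align*}

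Next I would record the four relevant degrees. Since $\mathcal{H}_1=\mathcal{H}(u)\bigodot\mathcal{T}(v_k)$ identifies $u$ with $v_k$ and $d_{\mathcal{T}}(v_k)=2$, we have $d_{\mathcal{H}_1}(v_k)=d_{\mathcal{H}}(u)+2$, while deleting $e_k$ gives $d_{\mathcal{H}_2}(v_k)=d_{\mathcal{H}}(u)+1$. Because $v_n$ is a pendent vertex of $\mathcal{T}$ untouched by the identification, $d_{\mathcal{H}_1}(v_n)=1$, and adding the new edge gives $d_{\mathcal{H}_2}(v_n)=2$. Substituting, the contribution from $v_k$ is $2d_{\mathcal{H}}(u)+3$ and that from $v_n$ is $-3$, so $M(\mathcal{H}_1)-M(\mathcal{H}_2)=2d_{\mathcal{H}}(u)$. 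The positivity then follows from the hypothesis $\mathcal{H}\neq P_0^{(m)}$: as $\mathcal{H}$ is connected and is not the single-vertex hypergraph, it contains at least one edge, so $u$ has positive degree and $2d_{\mathcal{H}}(u)\geq 2>0$, yielding $M(\mathcal{H}_1)>M(\mathcal{H}_2)$.

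The one point that needs care — and the only place the remaining hypotheses enter — is the claim that the inserted edge $(e_k\setminus\{v_k\})\cup\{v_n\}$ is a genuine $m$-edge and that no vertex other than $v_k$ and $v_n$ changes degree. This requires $v_n\notin e_k$ and $v_n\neq v_k$. I would deduce this from $v_n$ being pendent together with the distance condition $d_{\mathcal{T}}(u_1,v_n)>d_{\mathcal{T}}(u_2,v_n)$, which forces $v_n$ to lie strictly on the $e_{k+1}$ side of $v_k$ in the binary hypertree $\mathcal{T}$; in particular $v_n$ is disjoint from $e_k\setminus\{v_k\}$, so replacing $v_k$ by $v_n$ keeps the edge size equal to $m$ and leaves all other degrees intact. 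I expect this structural verification to be the main (if modest) obstacle, since the degree bookkeeping above is otherwise routine; notably, the explicit distance hypothesis is used only to guarantee that the edge move is well defined and degree-preserving off $\{v_k,v_n\}$, not in the arithmetic itself.
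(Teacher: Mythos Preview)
Your proposal is correct and follows essentially the same approach as the paper's proof: both isolate the two vertices $v_k$ and $v_n$ whose degrees change, compute $M(\mathcal{H}_1)-M(\mathcal{H}_2)=(d_{\mathcal{H}}(u)+2)^2+1-(d_{\mathcal{H}}(u)+1)^2-4=2d_{\mathcal{H}}(u)>0$, and invoke $\mathcal{H}\neq P_0^{(m)}$ for positivity. Your version is slightly more explicit in justifying why only these two degrees change and why $v_n\notin e_k$, which the paper leaves implicit.
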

\begin{proof}
By the definition of the Zagreb index, we have
\begin{align*}
M(\mathcal{H}_{1})-M(\mathcal{H}_{2})&=d^{2}_{\mathcal{H}_{1}}(v_{k})+d^{2}_{\mathcal{H}_{1}}(v_{n})-d^{2}_{\mathcal{H}_{2}}(v_{k})-d^{2}_{\mathcal{H}_{2}}(v_{n})\\
&=(d_{\mathcal{H}}(u)+2)^{2}+1-(d_{\mathcal{H}}(u)+1)^{2}-4\\
&=2d_{\mathcal{H}}(u)>0.
\end{align*}
\end{proof}
$\textbf{Transformation 4}$:
Let $\mathcal{H}$ be an $m$-uniform connected  hypergraph with $u, v\in{V(\mathcal{H})}$  such that $u\neq v$, $d_{\mathcal{H}}(u)>1$ and $d_{\mathcal{H}}(u)\geq d_{\mathcal{H}}(v)$.
Let $\mathcal{T}_{1},\mathcal{T}_{2}$ be two binary $m$-uniform hypertrees, where $|E(\mathcal{T}_{1})|>0$.
$\mathcal{H}_{1}$
denotes the hypergraph that results from identifying $u$ with the pendent vertex $u_{0}\in e_{0}$ of $\mathcal{T}_{1}$ and identifying $v$ with the pendent vertex $v_{0}$ of $\mathcal{T}_{2}$. Suppose that  $v_{t}\in V(\mathcal{T}_{2})$ is a pendent vertex of $\mathcal{H}_{1}$, let $\mathcal{H}_{2}$ be obtained from $\mathcal{H}_{1}$ by deleting $e_{0}$ and adding $(e_{0}\setminus\{u\})\bigcup\{v_{t}\}$.


\begin{lem}\label{sp5}
Let $\mathcal{H}_{2}$ be obtained from $\mathcal{H}_{1}$ by transformation 4.

(1). If $|E(\mathcal{T}_{2})|>0$, then $M(\mathcal{H}_{1})>M(\mathcal{H}_{2})$;

(2). If $|E(\mathcal{T}_{2})|=0, d_{\mathcal{H}}(u)> d_{\mathcal{H}}(v)$, then $M(\mathcal{H}_{1})>M(\mathcal{H}_{2})$.
\end{lem}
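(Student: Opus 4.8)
Since the Zagreb index is determined by the degree sequence, the plan is to follow the pattern of Lemmas \ref{sp1}--\ref{sp4}: identify precisely which vertex degrees change under transformation 4 and compute $M(\mathcal{H}_{1})-M(\mathcal{H}_{2})$ directly. Passing from $\mathcal{H}_{1}$ to $\mathcal{H}_{2}$ deletes $e_{0}$ and adds $(e_{0}\setminus\{u\})\cup\{v_{t}\}$. Every vertex of $e_{0}\setminus\{u\}$ belongs both to the deleted edge and to the added edge, so its degree is unchanged; only $u$ (whose degree falls by one) and $v_{t}$ (whose degree rises by one) are affected. Hence
\begin{align*}
M(\mathcal{H}_{1})-M(\mathcal{H}_{2})=\bigl(d^{2}_{\mathcal{H}_{1}}(u)-d^{2}_{\mathcal{H}_{2}}(u)\bigr)+\bigl(d^{2}_{\mathcal{H}_{1}}(v_{t})-d^{2}_{\mathcal{H}_{2}}(v_{t})\bigr).
\end{align*}

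First I would pin down the degree of $u$. Because $u$ is identified with the pendent vertex $u_{0}\in e_{0}$ of $\mathcal{T}_{1}$, attaching $\mathcal{T}_{1}$ raises its degree by exactly one, so $d_{\mathcal{H}_{1}}(u)=d_{\mathcal{H}}(u)+1$ and $d_{\mathcal{H}_{2}}(u)=d_{\mathcal{H}}(u)$. The degree of $v_{t}$ is where the two parts genuinely differ, and identifying it correctly is the only delicate step. In part (1), since $|E(\mathcal{T}_{2})|>0$, the hypothesis that $v_{t}\in V(\mathcal{T}_{2})$ is a pendent vertex of $\mathcal{H}_{1}$ gives $d_{\mathcal{H}_{1}}(v_{t})=1$, hence $d_{\mathcal{H}_{2}}(v_{t})=2$. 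In part (2), $|E(\mathcal{T}_{2})|=0$ means $\mathcal{T}_{2}$ is the single vertex $v_{0}=v$, so necessarily $v_{t}=v$; then $d_{\mathcal{H}_{1}}(v_{t})=d_{\mathcal{H}}(v)$ and $d_{\mathcal{H}_{2}}(v_{t})=d_{\mathcal{H}}(v)+1$.

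Substituting these values into the displayed identity finishes both parts. For part (1),
\begin{align*}
M(\mathcal{H}_{1})-M(\mathcal{H}_{2})=\bigl((d_{\mathcal{H}}(u)+1)^{2}-d^{2}_{\mathcal{H}}(u)\bigr)+(1-4)=2\bigl(d_{\mathcal{H}}(u)-1\bigr)>0,
\end{align*}
where positivity uses the hypothesis $d_{\mathcal{H}}(u)>1$. For part (2),
\begin{align*}
M(\mathcal{H}_{1})-M(\mathcal{H}_{2})=\bigl((d_{\mathcal{H}}(u)+1)^{2}-d^{2}_{\mathcal{H}}(u)\bigr)+\bigl(d^{2}_{\mathcal{H}}(v)-(d_{\mathcal{H}}(v)+1)^{2}\bigr)=2\bigl(d_{\mathcal{H}}(u)-d_{\mathcal{H}}(v)\bigr)>0,
\end{align*}
where positivity uses $d_{\mathcal{H}}(u)>d_{\mathcal{H}}(v)$. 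The computation itself is routine; I expect the only point needing care to be the determination of $v_{t}$, namely that $d_{\mathcal{H}_{1}}(v_{t})=1$ in part (1) whereas $v_{t}=v$ with $d_{\mathcal{H}_{1}}(v_{t})=d_{\mathcal{H}}(v)$ in part (2). This dichotomy is exactly what forces the different degree hypotheses ($d_{\mathcal{H}}(u)>1$ versus $d_{\mathcal{H}}(u)>d_{\mathcal{H}}(v)$) used in the two parts.
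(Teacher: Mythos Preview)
Your proof is correct and follows essentially the same approach as the paper: both identify that only the degrees of $u$ and $v_{t}$ change under transformation~4, compute $M(\mathcal{H}_{1})-M(\mathcal{H}_{2})$ by substituting the explicit degrees, and obtain $2(d_{\mathcal{H}}(u)-1)>0$ in part~(1) and $2(d_{\mathcal{H}}(u)-d_{\mathcal{H}}(v))>0$ in part~(2). Your write-up is in fact more explicit than the paper's in justifying why $d_{\mathcal{H}_{1}}(v_{t})=1$ in part~(1) and $v_{t}=v$ in part~(2), which is helpful.
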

\begin{proof}
By the definition of the Zagreb index,
if $|E(\mathcal{T}_{2})|>0$, we have
\begin{align*}
M(\mathcal{H}_{1})-M(\mathcal{H}_{2})&=d^{2}_{\mathcal{H}_{1}}(u)+d^{2}_{\mathcal{H}_{1}}(v_{t})-d^{2}_{\mathcal{H}_{2}}(u)-d^{2}_{\mathcal{H}_{2}}(v_{t})\\
&=(d_{\mathcal{H}}(u)+1)^{2}+1-d_{\mathcal{H}}^{2}(u)-4\\
&=2d_{\mathcal{H}}(u)-2>0.
\end{align*}
If $|E(\mathcal{T}_{2})|=0$, $d_{\mathcal{H}}(u)> d_{\mathcal{H}}(v)$, we have
\begin{align*}
M(\mathcal{H}_{1})-M(\mathcal{H}_{2})&=d^{2}_{\mathcal{H}_{1}}(u)+d^{2}_{\mathcal{H}_{1}}(v_{t})-d^{2}_{\mathcal{H}_{2}}(u)-d^{2}_{\mathcal{H}_{2}}(v_{t})\\
&=(d_{\mathcal{H}}(u)+1)^{2}+d_{\mathcal{H}}^{2}(v)-d_{\mathcal{H}}^{2}(u)-(d_{\mathcal{H}}(v)+1)^{2}\\
&=2d_{\mathcal{H}}(u)-2d_{\mathcal{H}}(v)>0.
\end{align*}
\end{proof}


\section{The $S$-order in hypertrees}

In this section, we give the first and last hypergraphs in an $S$-order of all uniform hypertrees.

In \cite{doi:10.1080/03081087.2021.1953431}, the first $3k$th order spectral moments of uniform hypertrees were given. Let $N_{\mathcal{H}}(\widehat{\mathcal{H}})$ be the number of sub-hypergraphs of $\mathcal{H}$ isomorphic to $\widehat{\mathcal{H}}$ and $S_{q}$ be a star with $q$ edges.
\begin{lem}\cite{doi:10.1080/03081087.2021.1953431}\label{w1}
Let $\mathcal{T}=(V(\mathcal{T}),E(\mathcal{T}))$ be an $m$-uniform hypertree. Then
\begin{align*}
{\mathrm{S}_{m}}(\mathcal{T})&= {m^{m - 1}}{(m - 1)^{(|E(\mathcal{T})| - 1)(m - 1)}}{N_{\mathcal{T}}(P^{(m)}_1)} ,\\
\mathrm{S}_{2m}(\mathcal{T})&=m^{m-1}(m-1)^{(|E(\mathcal{T})|-1)(m-1)}N_{\mathcal{T}}(P_{1}^{(m)})+2m^{2m-3}(m-1)^{(|E(\mathcal{T})|-2)(m-1)}N_{\mathcal{T}}(P_{2}^{(m)}),\\
\mathrm{S}_{3m}(\mathcal{T})&=m^{m-1}(m-1)^{(|E(\mathcal{T})|-1)(m-1)}N_{\mathcal{T}}(P_{1}^{(m)})+6m^{2m-3}(m-1)^{(|E(\mathcal{T})|-2)(m-1)}N_{\mathcal{T}}(P_{2}^{(m)})\\&+3m^{3m-5}(m-1)^{(|E(\mathcal{T})|-3)(m-1)}N_{\mathcal{T}}(P_{3}^{(m)})+6m^{3m-5}(m-1)^{(|E(\mathcal{T})|-3)(m-1)}N_{\mathcal{T}}(S_{3}^{(m)}),\\
\mathrm{S}_{d}(\mathcal{T})&=0, \text{~for~} d=1,\ldots, m-1, m+1,\ldots, 2m-1, 2m+1,\ldots, 3m-1.\\
\end{align*}
\end{lem}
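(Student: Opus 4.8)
The plan is to treat the vanishing assertion and the three nonzero moments separately. For the vanishing I would exploit a root-of-unity symmetry of the spectrum of $\mathcal{A}_{\mathcal{T}}$. Since $\mathcal{T}$ is a hypertree, $|V(\mathcal{T})|=(|E(\mathcal{T})|-1)(m-1)+m$ and the edges can be ordered so that each edge introduces at least one vertex lying on no earlier edge; processing the edges in this order I can build greedily a coloring $c\colon V(\mathcal{T})\to\mathbb{Z}_m$ with $\sum_{v\in e}c(v)\equiv 1\pmod m$ for every $e\in E(\mathcal{T})$. Writing $\omega=e^{2\pi\mathrm{i}/m}$, the diagonal rescaling $y_v=\omega^{c(v)}x_v$ turns any eigenpair $(\lambda,x)$ of $\mathcal{A}_{\mathcal{T}}$ into an eigenpair with eigenvalue $\omega\lambda$, so the spectrum of $\mathcal{A}_{\mathcal{T}}$, counted with multiplicity, is invariant under multiplication by $\omega$. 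Hence $\mathrm{S}_d(\mathcal{T})=\sum_\lambda\lambda^d=\omega^{d}\,\mathrm{S}_d(\mathcal{T})$, which forces $\mathrm{S}_d(\mathcal{T})=0$ whenever $m\nmid d$; this gives the last line of the statement at once and is consistent with Lemma \ref{L2}(1).

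For the nonzero orders I would use $\mathrm{S}_d(\mathcal{T})=\mathrm{Tr}_d(\mathcal{A}_{\mathcal{T}})$ together with a combinatorial expansion of the tensor trace. By the Morozov--Shakirov formula \cite{2011Analogue} and the Harary--Sachs expansion of Clark and Cooper \cite{clark2021harary} (equivalently the trace formulas of Shao et al.\ \cite{shao2015some}), $\mathrm{Tr}_d(\mathcal{A}_{\mathcal{T}})$ is a weighted sum over closed ``Veblen'' multi-hypergraph structures $W$ whose edges are drawn, with multiplicity, from $E(\mathcal{T})$. Each $W$ contributes a combinatorial multiplicity times a power of $m$, a power of $\tfrac{1}{(m-1)!}$ from the edge weights, and the dimension factor $(m-1)^{\,|V(\mathcal{T})|-|V(\mathrm{supp}\,W)|}$ recording the free indices at the vertices outside the support of $W$. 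Grouping the terms by the isomorphism type of $\mathrm{supp}\,W$ reduces everything to deciding which support types occur and evaluating their coefficients.

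The key structural input is that $\mathcal{T}$ has no hypercycle, so $\mathrm{supp}\,W$ is a sub-hypertree whose number of distinct edges is at most $d/m$, each edge having to be filled completely in order to close up. At order $m$ the only support is the single edge $P_1^{(m)}$, and Lemma \ref{L2}(2) already supplies its coefficient, since $N_{\mathcal{T}}(P_1^{(m)})=|E(\mathcal{T})|$ and $|V(\mathcal{T})|-m=(|E(\mathcal{T})|-1)(m-1)$. At order $2m$ the admissible supports are $P_1^{(m)}$ and $P_2^{(m)}$, and at order $3m$ they are $P_1^{(m)},P_2^{(m)},P_3^{(m)}$ and $S_3^{(m)}$, because $P_3^{(m)}$ and $S_3^{(m)}$ are the only $3$-edge hypertrees up to isomorphism. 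For a support with $e$ edges one has $|V(\mathcal{T})|-|V(\mathrm{supp}\,W)|=(|E(\mathcal{T})|-e)(m-1)$, producing the displayed powers of $m-1$, while the displayed powers of $m$ are $m^{\,e(m-2)+1}$.

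The main obstacle is the evaluation of the numerical multiplicities $1$; $1,2$; $1,6,3,6$. Extracting them means unwinding the Morozov--Shakirov sum over the permutations of the incidence slots, restricted to a fixed support and subject to the closure constraints, and in particular accounting for multi-edge structures such as the order-$3m$ contributions supported on $P_2^{(m)}$ in which one edge is traversed twice. Verifying that these sums collapse to exactly $1,6,3,6$, and that the single-edge support keeps contributing the same $m^{m-1}(m-1)^{(|E(\mathcal{T})|-1)(m-1)}$ at every order $km$, is the technical heart of the argument and is precisely the computation performed in \cite{doi:10.1080/03081087.2021.1953431}.
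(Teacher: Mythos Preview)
The paper does not prove this lemma; it is quoted from \cite{doi:10.1080/03081087.2021.1953431} without argument, so there is no in-paper proof to compare against directly. Your outline is nonetheless sound and matches the methodology the paper itself employs when it derives the analogous formulas for linear unicyclic hypergraphs in Corollaries~\ref{w3} and~\ref{sp11}. There the computation runs through Theorem~\ref{sp8}, which expresses $\mathrm{S}_d$ as a sum over connected sub-hypertrees $\widehat{\mathcal{T}}$ equipped with an edge-weight $\omega$ with $\omega(\widehat{\mathcal{T}})=d/m$; your Veblen multi-hypergraph language is the same object, with $\omega(e)$ playing the role of the multiplicity of $e$. The enumeration of admissible supports ($P_1^{(m)}$; $P_1^{(m)},P_2^{(m)}$; $P_1^{(m)},P_2^{(m)},P_3^{(m)},S_3^{(m)}$) and the extraction of the numerical factors $1;\,1,2;\,1,6,3,6$ from the product $\prod_v(d_v(\widehat{\mathcal{T}}(\omega))-1)!\prod_e\omega(e)^{m-1}/(\omega(e)!)^m$ is exactly the computation carried out in those corollaries, so your plan and the source argument coincide in substance.

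One point to tighten in your vanishing argument: the map $(\lambda,x)\mapsto(\omega\lambda,y)$ with $y_v=\omega^{c(v)}x_v$ is, as written, only a bijection of eigen\emph{pairs}, and that alone does not transfer algebraic multiplicities in the characteristic polynomial. To get $\mathrm{S}_d=\omega^{d}\mathrm{S}_d$ rigorously you should observe that the diagonal matrix $D=\mathrm{diag}(\omega^{c(v)})$ realises a diagonal similarity taking $\mathcal{A}_{\mathcal{T}}$ to $\omega\,\mathcal{A}_{\mathcal{T}}$, and that such similarities preserve the characteristic polynomial of a tensor; then the full spectrum with multiplicity is $\omega$-invariant. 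Alternatively, and closer to how the paper handles the unicyclic case, the vanishing for $m\nmid d$ drops out directly from the trace formula (the last line of Theorem~\ref{sp8}), with no spectral-symmetry argument needed.
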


Let $\textbf{T}_{q}$ be the set of all $m$-uniform hypertrees with $q$ edges. The following theorem gives the last hypergraph in an $S$-order of all $m$-uniform hypertrees.
\begin{thm}\label{sp6}
In an $S$-order of $\textbf{T}_{q}$, the last hypergraph is the hyperstar $S_{q}^{(m)}$.
\end{thm}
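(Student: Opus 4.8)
The plan is to reduce the $S$-order comparison inside $\textbf{T}_q$ to a single spectral moment, and then to a Zagreb-index extremal problem that the transformations of Section 2 are built to solve. First I would observe that every $\mathcal{T}\in\textbf{T}_q$ has the same number of vertices $n=q(m-1)+1$, so $\mathrm{S}_0=n(m-1)^{n-1}$ agrees across $\textbf{T}_q$; by Lemma \ref{w1} one has $\mathrm{S}_d=0$ for $d=1,\ldots,m-1,m+1,\ldots,2m-1$, while $\mathrm{S}_m=m^{m-1}(m-1)^{(q-1)(m-1)}N_{\mathcal{T}}(P_1^{(m)})$ with $N_{\mathcal{T}}(P_1^{(m)})=q$ for every hypertree. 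Hence $\mathrm{S}_i$ is constant on $\textbf{T}_q$ for all $i\le 2m-1$, and the $S$-order is first decided at $i=2m$.

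Second, by Lemma \ref{w1},
\[
\mathrm{S}_{2m}(\mathcal{T})=m^{m-1}(m-1)^{(q-1)(m-1)}q+2m^{2m-3}(m-1)^{(q-2)(m-1)}N_{\mathcal{T}}(P_2^{(m)}),
\]
so $\mathrm{S}_{2m}$ is a strictly increasing affine function of $N_{\mathcal{T}}(P_2^{(m)})$. A copy of $P_2^{(m)}$ is a pair of edges meeting in exactly one vertex, and since a hypertree is linear, counting such pairs vertex by vertex gives $N_{\mathcal{T}}(P_2^{(m)})=\sum_{v}\binom{d_v}{2}=\tfrac12\bigl(M(\mathcal{T})-mq\bigr)$, using $\sum_v d_v=mq$. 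Thus maximizing $\mathrm{S}_{2m}$ over $\textbf{T}_q$ is exactly maximizing the Zagreb index $M$.

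Third, and this is the heart of the matter, I would show that $S_q^{(m)}$ is the \emph{unique} maximizer of $M$ in $\textbf{T}_q$. The transformations of Section 2 do the work: if $\mathcal{T}\neq S_q^{(m)}$, pick a vertex $w$ of maximum degree; since $\mathcal{T}$ is not the hyperstar its maximum degree is below $q$, so some edge avoids $w$, and one locates a vertex $u\neq w$ carrying a pendent edge along a shortest path toward $w$ that is eligible for Transformation 1 (or Transformation 2 played against $w$), which by Lemma \ref{sp1} (resp. Lemma \ref{sp2}) strictly increases $M$ while keeping $q$ edges and the hypertree structure. As $\textbf{T}_q$ is finite and $M$ strictly increases at each step, the iteration terminates only at the configuration admitting no such move, namely $S_q^{(m)}$. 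Equivalently, writing $x_v=d_v-1\ge 0$ gives $\sum_v x_v=q-1$, and $\sum_v x_v^2$ (hence $M$) attains its absolute maximum $(q-1)^2$ only when a single $x_v$ equals $q-1$, which forces the star and settles uniqueness. Combining the three steps, for every $\mathcal{T}\in\textbf{T}_q\setminus\{S_q^{(m)}\}$ the moments agree up to order $2m-1$ and $\mathrm{S}_{2m}(\mathcal{T})<\mathrm{S}_{2m}(S_q^{(m)})$, so $\mathcal{T}\prec_s S_q^{(m)}$ and the hyperstar is last.

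The main obstacle I anticipate is the third step: verifying that a non-star hypertree always admits a degree-preserving transformation that strictly raises $M$, and that no non-star hypertree is a local maximum. The degree-sequence reformulation via $\sum_v(d_v-1)=q-1$ is the safeguard that simultaneously pins down the value of the maximum and its uniqueness, so I would fall back on it to exclude spurious local maxima whenever the purely transformational bookkeeping (choosing the right $u$, $v$ and checking the hypotheses of Transformations 1 and 2) becomes delicate.
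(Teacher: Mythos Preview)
Your proposal is correct and follows essentially the same route as the paper: reduce the $S$-order on $\textbf{T}_q$ to the $2m$th moment via Lemma~\ref{w1}, rewrite $N_{\mathcal{T}}(P_2^{(m)})$ as $\tfrac12 M(\mathcal{T})-\tfrac{qm}{2}$, and then use Transformation~1 with Lemma~\ref{sp1} to drive any non-star hypertree to $S_q^{(m)}$ while strictly increasing $M$. The paper invokes only Transformation~1 (not Transformation~2) and does not include your degree-sequence backup argument, but otherwise the proofs coincide.
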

\begin{proof}
Since in all $m$-uniform hypertrees with $q$ edges the spectral moments $\mathrm{S}_{0},\mathrm{S}_{1},\\\ldots,\mathrm{S}_{2m-1}$ are the same, the first significant spectral moment is the $2m$th. By Lemma \ref{w1}, $\mathrm{S}_{2m}$ is determined by the number of $P_{2}^{(m)}$. The number of vertices of $m$-uniform hypertrees with $q$ edges is $qm-q+1$. For any hypertree $\mathcal{T}$ in $\textbf{T}_{q}$, we have
$$
N_{\mathcal{T}}(P_{2}^{(m)})=\sum\limits_{i=1}^{qm-q+1}{d_{i}\choose2}=\frac{1}{2}\sum\limits_{i=1}^{qm-q+1}d_{i}^{2}-\frac{qm}{2}=\frac{1}{2}M(\mathcal{T})-\frac{qm}{2},
$$
where $d_{1}+d_{2}+\cdots+d_{qm-q+1}=mq$.

Repeating transformation 1, any $m$-uniform hypertree with $q$ edges can changed into $S_{q}^{(m)}$.
And by Lemma \ref{sp1}, each application of transformation 1 strictly increases the Zagreb index.
Therefore, in an $S$-order of $\textbf{T}_{q}$, the last hypergraph is the hyperstar $S_{q}^{(m)}$.
\end{proof}
Let $\textbf{T}$ be the set of all binary $m$-uniform hypertrees with $q$ edges. We characterize the first few hypergraphs
in the $S$-order of all $m$-uniform hypertrees.
\begin{thm}\label{zsy1}
$\textbf{T}\prec_{s}\textbf{T}_{q}\setminus \textbf{T}$.
\end{thm}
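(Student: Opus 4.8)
The plan is to reduce the comparison between a binary and a non-binary hypertree to the first nontrivial spectral moment $\mathrm{S}_{2m}$, and then to a purely combinatorial statement about degree sequences. By Lemma \ref{L2} together with Lemma \ref{w1}, every hypertree in $\textbf{T}_q$ shares the values $\mathrm{S}_0,\mathrm{S}_1,\ldots,\mathrm{S}_{2m-1}$: the moments of order $1,\ldots,m-1$ and $m+1,\ldots,2m-1$ vanish, $\mathrm{S}_0=n(m-1)^{n-1}$ is fixed because every member has $n=q(m-1)+1$ vertices, and $\mathrm{S}_m$ depends only on $N_{\mathcal{T}}(P_1^{(m)})=q$. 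Hence for any $\mathcal{T}_1\in\textbf{T}$ and $\mathcal{T}_2\in\textbf{T}_q\setminus\textbf{T}$ the first possible point of difference is $\mathrm{S}_{2m}$, so it suffices to prove $\mathrm{S}_{2m}(\mathcal{T}_1)<\mathrm{S}_{2m}(\mathcal{T}_2)$.

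Next I would invoke the identity already established in the proof of Theorem \ref{sp6}, namely $N_{\mathcal{T}}(P_2^{(m)})=\tfrac12 M(\mathcal{T})-\tfrac{qm}{2}$. Combined with Lemma \ref{w1}, this shows that $\mathrm{S}_{2m}(\mathcal{T})$ is a strictly increasing affine function of the Zagreb index $M(\mathcal{T})$, since the coefficient $2m^{2m-3}(m-1)^{(q-2)(m-1)}$ multiplying $N_{\mathcal{T}}(P_2^{(m)})$ is positive. Thus the theorem reduces to the single claim that every binary hypertree in $\textbf{T}_q$ has strictly smaller Zagreb index than every non-binary one.

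The core step is this Zagreb-index inequality, and it is essentially the only place requiring work. I would write $d_i=1+x_i$ with $x_i\ge 0$, and use that $\sum_i d_i=mq$ together with $n=q(m-1)+1$ forces $\sum_i x_i=q-1$; then $M(\mathcal{T})=\sum_i x_i^2+2(q-1)+n$. Since $x^2\ge x$ for every nonnegative integer $x$, with equality exactly when $x\in\{0,1\}$, we obtain $\sum_i x_i^2\ge\sum_i x_i=q-1$, hence $M(\mathcal{T})\ge qm+2q-2$, with equality precisely when every $x_i\in\{0,1\}$, i.e. when $\max_i d_i\le 2$. A hypertree attains this minimum if and only if it is binary; a non-binary hypertree has a vertex of degree at least $3$, which forces some $x_i\ge 2$ and therefore $M(\mathcal{T})>qm+2q-2$. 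Feeding this strict gap into the monotonicity of $\mathrm{S}_{2m}$ in $M$ gives $\mathrm{S}_{2m}(\mathcal{T}_1)<\mathrm{S}_{2m}(\mathcal{T}_2)$, and since all lower moments agree, $\mathcal{T}_1\prec_s\mathcal{T}_2$.

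The main obstacle is not conceptual but bookkeeping: once the substitution $d_i=1+x_i$ is in place, the elementary inequality $\sum_i x_i^2\ge\sum_i x_i$ does all the work, and its equality case is exactly the binary/non-binary dichotomy. Notably, no appeal to Transformations 1--4 is needed for this statement, because at order $2m$ the spectral moment is controlled entirely by the Zagreb index.
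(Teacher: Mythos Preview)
Your proof is correct and genuinely different from the paper's. Both arguments first reduce to comparing $\mathrm{S}_{2m}$, and then to the Zagreb index via the identity $N_{\mathcal{T}}(P_2^{(m)})=\tfrac12 M(\mathcal{T})-\tfrac{qm}{2}$; where they diverge is in establishing that binary hypertrees minimize $M$. The paper argues by repeatedly applying Transformation~3 (Lemma~\ref{sp4}): each step takes a vertex of degree at least $3$, peels off an edge to a pendent vertex, and strictly decreases $M$, until a binary hypertree is reached. Implicit in that argument is the fact that all binary hypertrees in $\textbf{T}_q$ share the same Zagreb index, which is what makes the conclusion $\textbf{T}\prec_s\textbf{T}_q\setminus\textbf{T}$ (rather than merely ``some binary tree beats each non-binary tree'') valid. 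Your approach is more direct and makes this point explicit: the substitution $d_i=1+x_i$ with $\sum_i x_i=q-1$ and the integer inequality $x_i^2\ge x_i$ give $M(\mathcal{T})\ge qm+2q-2$ with equality exactly on $\textbf{T}$, so the strict gap for any non-binary hypertree is immediate. Your argument is self-contained and avoids the edge-moving machinery entirely; the paper's route, while less elementary here, is consistent with the transformation framework used elsewhere (e.g.\ Theorem~\ref{sp10}).
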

\begin{proof}

As in the proof of Theorem \ref{sp6} we pay attention to the Zagreb index.
Repeating transformation 3, any $m$-uniform hypertree with $q$ edges can changed into a binary $m$-uniform hypertree with $q$ edges. And from Lemma \ref{sp4},
each application of transformation 3 strictly decreases the Zagreb index.
Hence, $\textbf{T}\prec_{s}\textbf{T}_{q}\setminus \textbf{T}$.
\end{proof}

Let $P_{3}(\mathcal{H})$ be the set of all sub-hyperpaths length $3$ of an $m$-uniform hypergraph $\mathcal{H}$.
\begin{lem}\label{z1}
Let $e=\{u,v,w_{1},\ldots,w_{m-2}\}$ be an edge and $\mathcal{H}_{1},\ldots,\mathcal{H}_{p}$ be pairwise disjoint  connected $m$-uniform hypergraphs with $\mathcal{H}_{i}\neq P_{0}^{(m)}$ and $\widetilde{w}_{i}\in V(\mathcal{H}_{i})$ for each $i\in[p]$, where $m\geq 3$, $1\leq p\leq m-2$. Let $\mathcal{H}=e(w_{1},\ldots,w_{p})\bigodot(\mathcal{H}_{1}(\widetilde{w}_{1}),\ldots,\mathcal{H}_{p}(\widetilde{w}_{p}))$. Let $\mathcal{H}_{r,s}^{e}=\mathcal{H}(u,v)\bigodot(P_{r}^{(m)}(\widetilde{u}),P_{s}^{(m)}(\widetilde{v}))$, where $\widetilde{u},\widetilde{v}$ are respectively the pendent vertices of $P_{r}^{(m)}$ and $P_{s}^{(m)}$. If $r\geq s\geq 1$, then
$$
N_{\mathcal{H}_{r,s}^{e}}(P_{3}^{(m)})> N_{\mathcal{H}_{r+s,0}^{e}}(P_{3}^{(m)}).
$$
\end{lem}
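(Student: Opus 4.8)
The plan is to count copies of $P_3^{(m)}$ through their unique \emph{middle edge} and to show that, when the two hypergraphs are compared, every contribution except a localized one at $e$ cancels, leaving a strictly positive surplus. First I would record a counting identity: a copy of $P_3^{(m)}$ is a triple of edges $f_1-f_2-f_3$ meeting consecutively in single vertices, with $f_2$ its unique middle edge, so that
$$
N_{\mathcal{H}}(P_3^{(m)})=\sum_{f\in E(\mathcal{H})}\ \sum_{\{x,y\}\subseteq f,\ x\neq y}\bigl(d_{\mathcal{H}}(x)-1\bigr)\bigl(d_{\mathcal{H}}(y)-1\bigr),
$$
where for each middle edge $f$ one attaches a further edge at two distinct vertices $x,y\in f$. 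In the acyclic attachment region this count is exact; any correction coming from a cycle occurs only inside the fixed blocks $\mathcal{H}_i$, and such corrections will cancel below.

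Next I would compare the two hypergraphs term by term. They share the common core $\mathcal{H}$ and the same total number $r+s$ of path edges; they differ only near $e$: in $\mathcal{H}_{r,s}^{e}$ both $u$ and $v$ have degree $2$ (each rooting a path), while in $\mathcal{H}_{r+s,0}^{e}$ only $u$ has degree $2$ and $v$ has degree $1$, the path of length $s$ having been relocated to the far end of the path at $u$. I would then argue that every copy of $P_3^{(m)}$ lying wholly inside some $\mathcal{H}_i$, or using $e$ together with two core edges, or lying in the interior or at the far end of a path, contributes identically to both sides and hence cancels. The only surviving copies are those using at least one path edge together with $e$, or those that see the vertex $v$ as a bend.

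Then I would evaluate the survivors. Writing $\delta_i=d_{\mathcal{H}_i}(\widetilde{w}_i)$ and $D=\sum_{i=1}^{p}\delta_i$, the copies with $e$ as middle edge contribute $1+2D$ in $\mathcal{H}_{r,s}^{e}$ (namely $a_1-e-b_1$, together with the $D$ copies $a_1-e-(\text{core})$ and the $D$ copies $b_1-e-(\text{core})$) but only $D$ in $\mathcal{H}_{r+s,0}^{e}$, a gain of $1+D$; whereas the copies in which $e$ is an end edge or which live purely among path edges total $(r-1)+(s-1)$ against $(r+s-1)$, a loss of $1$. Summing gives $(1+D)-1=D$, so $N_{\mathcal{H}_{r,s}^{e}}(P_3^{(m)})-N_{\mathcal{H}_{r+s,0}^{e}}(P_3^{(m)})=D$. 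Since each $\mathcal{H}_i$ is connected and different from $P_0^{(m)}$, it has at least one edge and no isolated vertex, so $\widetilde{w}_i$ lies in an edge and $\delta_i\geq1$; hence $D\geq p\geq1>0$, which is exactly the asserted strict inequality.

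The main obstacle is the bookkeeping at the junction: one must verify, uniformly in $r$ and $s$ and in particular in the boundary cases $r=1$ or $s=1$ (where a path contributes no interior copy of $P_3^{(m)}$), that the telescoping identities such as $(r-1)+(s-1)$ versus $(r+s-1)$ hold and that no straddling copy is double counted or omitted. It is reassuring that the resulting difference $D$ is independent of $r$ and $s$; indeed the same type of degree-count shows the Zagreb indices of the two hypergraphs agree, so $\mathrm{S}_{2m}$ ties and the decision is genuinely made at $\mathrm{S}_{3m}$ through $N(P_3^{(m)})$. The hypothesis $r\geq s\geq1$ is used only to guarantee $s\geq1$, i.e.\ that a genuine path is actually present at $v$ in $\mathcal{H}_{r,s}^{e}$.
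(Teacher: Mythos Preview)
Your argument is correct and, once the accounting is made precise, actually sharper than the paper's: you obtain the exact value
\[
N_{\mathcal{H}_{r,s}^{e}}(P_3^{(m)})-N_{\mathcal{H}_{r+s,0}^{e}}(P_3^{(m)})=D=\sum_{i=1}^{p}d_{\mathcal{H}_i}(\widetilde{w}_i)\geq p\geq 1,
\]
whereas the paper only proves the strict inequality by comparing both sides to the common baseline $\mathcal{H}_{r,0}^{e}$ and splitting into the three cases $s=1$, $s=2$, $s>2$. Your method (classify each $P_3$ by its middle edge, observe that every $P_3$ avoiding all path edges is literally the same subhypergraph on both sides, then tally the few remaining types) is genuinely different and avoids the case split; it also explains \emph{why} the hypothesis $p\geq1$ is exactly what forces strictness.

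Two small points of exposition to clean up. First, the displayed ``middle-edge'' identity is only an upper bound in general (the two outer edges could meet); you note this, and it is harmless here because any overcount sits entirely inside the fixed blocks $\mathcal{H}_i$ and cancels, but it would be clearer to drop the formula and simply enumerate the surviving $P_3$'s directly, as you in fact do afterwards. Second, your narrative paragraph says that copies ``lying in the interior or at the far end of a path'' cancel, yet two lines later you (correctly) account for \emph{all} path-only and $e$-as-end copies as $(r-1)+(s-1)$ versus $(r+s-1)$ with a net loss of $1$; the earlier sentence should be deleted or rephrased, since those copies do not cancel---their mismatch is precisely the $-1$ you later record. With those fixes the proof is complete and uniform in $r,s\geq1$.
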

\begin{proof}
Since $p\geq1 $, let $e_{1}\in E(\mathcal{H}_{1})$ be an edge incident with $\widetilde{w}_{1}$.
Let $e_{2}\in E(P_{r}^{(m)})$ be an edge incident with $\widetilde{u}$ and $e_{3}\in E(P_{s}^{(m)})$ be an edge incident with $\widetilde{v}$. We have $P_{3}(\mathcal{H}_{r,0}^{e})\subseteq P_{3}(\mathcal{H}_{r,s}^{e})$ and $P_{3}(\mathcal{H}_{r,0}^{e})\subseteq P_{3}(\mathcal{H}_{r+s,0}^{e})$. For a hyperpath $\mathcal{P}_{1}$ with $E(\mathcal{P}_{1})=\{e,e',e''\}$, $\mathcal{P}_{1}$ is also written $ee'e''$ in this paper.

If $s=1$,
there are hyperpaths $e_{2}ee_{3},e_{3}ee_{1}$
in $P_{3}(\mathcal{H}_{r,1}^{e})$ and not in $P_{3}(\mathcal{H}_{r,0}^{e})$. Since $p\geq1$,
$N_{\mathcal{H}_{r,1}^{e}}(P_{3}^{(m)})- N_{\mathcal{H}_{r,0}^{e}}(P_{3}^{(m)})\geq2.$
There is only one hyperpath $\mathcal{P}$ in $P_{3}(\mathcal{H}_{r+1,0}^{e})$ and not in $P_{3}(\mathcal{H}_{r,0}^{e})$. And the edges of $\mathcal{P}$ are not in $E(\mathcal{H}_{i}), i=1,2,\ldots,p$. We have $N_{\mathcal{H}_{r+1,0}^{e}}(P_{3}^{(m)})- N_{\mathcal{H}_{r,0}^{e}}(P_{3}^{(m)})=1.$ So, $N_{\mathcal{H}_{r,1}^{e}}(P_{3}^{(m)})> N_{\mathcal{H}_{r+1,0}^{e}}(P_{3}^{(m)})$.

If $s=2$, let $e_{4}\neq e_{3}\in E(P_{s}^{(m)})$. There are hyperpaths $e_{2}ee_{3},e_{3}ee_{1}, ee_{3}e_{4}$  in $P_{3}(\mathcal{H}_{r,2}^{e})$ and not in $P_{3}(\mathcal{H}_{r,0}^{e})$. Since $p\geq1$,
$N_{\mathcal{H}_{r,2}^{e}}(P_{3}^{(m)})- N_{\mathcal{H}_{r,0}^{e}}(P_{3}^{(m)})\geq3.$
There are only two hyperpaths $\mathcal{P}'$, $\mathcal{P}''$ in $P_{3}(\mathcal{H}_{r+2,0}^{e})$ and not in $P_{3}(\mathcal{H}_{r,0}^{e})$. And the edges of $\mathcal{P}'$ and $\mathcal{P}''$ are not in $E(\mathcal{H}_{i}), i=1,2,\ldots,p$. We have $N_{\mathcal{H}_{r+2,0}^{e}}(P_{3}^{(m)})- N_{\mathcal{H}_{r,0}^{e}}(P_{3}^{(m)})=2.$  So, $N_{\mathcal{H}_{r,2}^{e}}(P_{3}^{(m)})> N_{\mathcal{H}_{r+2,0}^{e}}(P_{3}^{(m)})$.

If $s>2$, similar to $s=2$, there are hyperpaths $e_{2}ee_{3},e_{3}ee_{1}, ee_{3}e_{4}$  in $P_{3}(\mathcal{H}_{r,s}^{e})$ and not in $P_{3}(\mathcal{H}_{r,0}^{e})$. For an $m$-uniform hyperpath with $q~ (q>2)$ edges, the number of the sub-hyperpaths with $3$ edges is $q-2$. Since  $p\geq1 $, $$N_{\mathcal{H}_{r,s}^{e}}(P_{3}^{(m)})- N_{\mathcal{H}_{r,0}^{e}}(P_{3}^{(m)})\geq 3+s-2=s+1.$$ Since $r\geq s>2$, there are only $s$ hyperpaths  in $P_{3}(\mathcal{H}_{r+s,0}^{e})$ and not in $P_{3}(\mathcal{H}_{r,0}^{e})$. We have $N_{\mathcal{H}_{r+s,0}^{e}}(P_{3}^{(m)})- N_{\mathcal{H}_{r,0}^{e}}(P_{3}^{(m)})=s.$ So, if $s>2$, $N_{\mathcal{H}_{r,s}^{e}}(P_{3}^{(m)})> N_{\mathcal{H}_{r+s,0}^{e}}(P_{3}^{(m)})$.

Therefore, if $r\geq s\geq 1$, we have
$N_{\mathcal{H}_{r,s}^{e}}(P_{3}^{(m)})> N_{\mathcal{H}_{r+s,0}^{e}}(P_{3}^{(m)}).$
\end{proof}
The following theorem gives the first hypergraph in an $S$-order of all $m$-uniform hypertrees.
\begin{thm}
In an $S$-order of $\textbf{T}_{q}$, the first hypergraph is the hyperpath $P_{q}^{(m)}$.
\end{thm}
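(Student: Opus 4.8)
The plan is to mirror the argument for Theorem \ref{sp6}, but now minimizing rather than maximizing and pushing the comparison one spectral moment further, to $\mathrm{S}_{3m}$. First I would record that for every $\mathcal{T}\in\textbf{T}_q$ the number of vertices is $qm-q+1$ and $N_{\mathcal{T}}(P_1^{(m)})=q$, so by Lemma \ref{w1} the moments $\mathrm{S}_0,\mathrm{S}_1,\ldots,\mathrm{S}_{2m-1}$ coincide across $\textbf{T}_q$ (they vanish except for the constant $\mathrm{S}_m$). By Theorem \ref{zsy1} we have $\textbf{T}\prec_s\textbf{T}_q\setminus\textbf{T}$, so the first hypergraph must lie in the set $\textbf{T}$ of binary $m$-uniform hypertrees, and it suffices to identify the first element of $\textbf{T}$.

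Next I would show that among binary hypertrees the comparison is settled by $\mathrm{S}_{3m}$, and in fact by $N_{\mathcal{T}}(P_3^{(m)})$ alone. A binary hypertree on $q$ edges has exactly $q-1$ vertices of degree $2$ and $q(m-2)+2$ vertices of degree $1$, so its Zagreb index equals the constant $qm+2q-2$; by the identity $N_{\mathcal{T}}(P_2^{(m)})=\tfrac12 M(\mathcal{T})-\tfrac{qm}{2}$ from the proof of Theorem \ref{sp6}, the moment $\mathrm{S}_{2m}$ is the same for all of $\textbf{T}$. Since a binary hypertree has no vertex of degree $3$, we also have $N_{\mathcal{T}}(S_3^{(m)})=0$. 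As $\mathrm{S}_{2m+1}=\cdots=\mathrm{S}_{3m-1}=0$, Lemma \ref{w1} shows that on $\textbf{T}$ the moment $\mathrm{S}_{3m}$ is a strictly increasing affine function of $N_{\mathcal{T}}(P_3^{(m)})$. Hence the first element of $\textbf{T}$ is exactly the binary hypertree minimizing $N_{\mathcal{T}}(P_3^{(m)})$.

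Then I would prove that $P_q^{(m)}$ is the unique minimizer of $N(P_3^{(m)})$ in $\textbf{T}$ by a straightening argument based on Lemma \ref{z1}. Associate with each binary hypertree its edge tree $G^{*}$ (nodes are edges, two nodes adjacent when the edges share a vertex); since the hypertree is linear, acyclic and binary, each shared vertex lies in exactly two edges, so $G^{*}$ is a genuine tree on $q$ nodes, and the hypertree equals $P_q^{(m)}$ precisely when $G^{*}$ is a path. If $\mathcal{T}\neq P_q^{(m)}$, then $G^{*}$ has a node of degree $\geq 3$; rooting $G^{*}$ at a leaf and taking such a node $b$ of maximum depth forces all of its downward branches to be paths, yielding at least two pendent hyperpaths $P_r^{(m)},P_s^{(m)}$ ($r\geq s\geq 1$) attached at two degree-$2$ vertices $u,v$ of the edge $b$, while the remaining $p=\deg_{G^{*}}(b)-2$ branches (with $1\leq p\leq m-2$) play the role of the $\mathcal{H}_i$ in Lemma \ref{z1}. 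Applying Lemma \ref{z1} replaces $\mathcal{T}$ by a binary hypertree with strictly smaller $N(P_3^{(m)})$; moreover this merge lowers the number of pendent edges by exactly one, so iterating terminates at the unique binary hypertree with the fewest pendent edges, namely $P_q^{(m)}$.

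The main obstacle is the third paragraph: Lemma \ref{z1} applies only when the two merged branches are genuine hyperpaths, whereas an arbitrary branch of a binary hypertree need not be a path. Resolving this is exactly the maximum-depth choice of the branching edge $b$, which guarantees two pendent path branches and the correct range $1\leq p\leq m-2$ on the surviving branches; one must also verify that each transformation keeps the hypergraph binary and preserves $q$, and that the pendent-edge count is a legitimate monovariant bottoming out only at $P_q^{(m)}$ (a tree with two leaves is a path). Finally, the degenerate case $m=2$ is immediate, since there ``binary'' already forces the tree to be a path, so $\textbf{T}=\{P_q^{(2)}\}$ and the claim follows directly from Theorem \ref{zsy1}.
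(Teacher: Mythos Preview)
Your proposal is correct and follows essentially the same approach as the paper: reduce to the binary class $\textbf{T}$ via Theorem \ref{zsy1}, dispose of $m=2$ trivially, observe that on $\textbf{T}$ the first discriminating moment is $\mathrm{S}_{3m}$ and depends only on $N_{\mathcal{T}}(P_3^{(m)})$, and then iteratively apply Lemma \ref{z1} at a deepest branching edge to straighten the tree into $P_q^{(m)}$. Your write-up is in fact more explicit than the paper's in two places: you justify why $\mathrm{S}_{2m}$ is constant on $\textbf{T}$ (via the constant Zagreb index), and you supply a clean termination monovariant (the pendent-edge count) where the paper simply says the process ends at the hyperpath.
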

\begin{proof}
In an $S$-order of $\textbf{T}_{q}$,  by Theorem \ref{zsy1}, the first hypergraph is in  $\textbf{T}$. When $m=2$, $\textbf{T}=\{P_{q}\}.$ Therefore, in an $S$-order of $\textbf{T}_{q}$, the first graph is the path $P_{q}$. When $m>2$,
since the spectral moments $\mathrm{S}_{0},\mathrm{S}_{1},\ldots,\mathrm{S}_{3m-1}$ are the same in $\textbf{T}$, the first significant spectral moment is the $3m$th. By Lemma \ref{w1}, $\mathrm{S}_{3m}$ is determined by the number of $S_{3}^{(m)}$ and $P_{3}^{(m)}$.

For any hypertree $\mathcal{T}$ in $\textbf{T}$, $N_{\mathcal{T}}(S_{3}^{(m)})=0$.
Let $e(\mathcal{T})$ denote the set of all edges of $\mathcal{T}$ that contain at least 3 vertices whose degree is equal to 2.
Fix a vertex $v$ of degree $2$ as a root. Let $\mathcal{T}_{1}, \mathcal{T}_{2}$ be the hypertrees attached at $v$.
We can repeatedly apply the transformation from Lemma \ref{z1} at any two vertices $u_{1}, u_{2}\in e\in e(\mathcal{T})$ with largest distance from the root in
every hypertree $\mathcal{T}_{i}$ and $d_{u_{1}}=d_{u_{2}}=2$, as long as $\mathcal{T}_{i}$ does not become a hyperpath. From Lemma \ref{z1}, each application of this transformation strictly decreases the number of  sub-hyperpaths with $3$ edges. In the end of this process, we arrive at the hyperpath $P_{q}^{(m)}$. Therefore, in an $S$-order of $\textbf{T}_{q}$, the first hypergraph is the hyperpath $P_{q}^{(m)}$.
\end{proof}
\section{The $S$-order in unicyclic hypergraphs}

In this section, the expressions of $2m$th and $3m$th order spectral moments of linear unicyclic $m$-uniform  hypergraphs are
obtained in terms of the number of sub-hypergraphs. We characterize the first and last hypergraphs in an $S$-order of  all linear unicyclic $m$-uniform
hypergraphs with given girth. And we give the last hypergraph in an $S$-order of  all linear unicyclic $m$-uniform  hypergraphs.




Let $\mathcal{H}(\omega)$ be a weighted uniform hypergraph, where $\omega: E(\mathcal{H})\rightarrow \mathbb{Z}^{+}$. Let $\omega(\mathcal{H})=\sum_{e\in E(\mathcal{H})}\omega(e)$ and $d_{v}(\mathcal{H}(\omega))=\sum_{e\in E_{v}(\mathcal{H})}\omega(e)$, where $E_{v}(\mathcal{H}):=\{e\in E(\mathcal{H})|v\in e\}$.  Let $C_{n}$ be a cycle with $n$ edges.
In \cite{ocestrada}, the formula for the spectral moments of linear unicyclic $m$-uniform  hypergraphs was given.
\begin{thm}\cite{ocestrada}\label{sp8}
Let $\mathcal{U}$ be a linear unicyclic $m$-uniform  hypergraph with girth $n$. If $m\mid d~(d\neq0)$ , then
\begin{equation}\label{888}
\mathrm{S}_{d}(\mathcal{U})=d(m-1)^{|V(\mathcal{U})|}(\sum\limits_{\mathcal{\widehat{T}}\in \mathcal{B}_{tree}(\mathcal{U})}tr_{d}(\mathcal{\widehat{T}})+\sum\limits_{\mathcal{G}\in \mathcal{B}_{cycle}(\mathcal{U})}tr_{d}(\mathcal{G}))
\end{equation}
and
$$
tr_{d}(\mathcal{\widehat{T}})=\sum\limits_{\omega:\omega(\mathcal{\widehat{T}})=d/m}(m-1)^{-|V(\mathcal{\widehat{T}})|}m^{(m-2)|E(\mathcal{\widehat{T}})|}\prod\limits_{v\in V(\mathcal{\widehat{T}})}(d_{v}(\mathcal{\widehat{T}}(\omega))-1)!\prod\limits_{e\in E(\mathcal{\widehat{T}})}\frac{\omega(e)^{m-1}}{(\omega(e)!)^{m}},
$$
$$
tr_{d}(\mathcal{G})=\sum\limits_{\omega:\omega(\mathcal{G})=d/m}2(m-1)^{-|V(\mathcal{G})|}m^{(m-2)|E(\mathcal{G})|-1}\prod\limits_{v\in V(\mathcal{G})}(d_{v}(\mathcal{G}(\omega))-1)!\prod\limits_{e\in E(\mathcal{G})}\frac{\omega(e)^{m-1}}{(\omega(e)!)^{m}}\Omega_{C_{n}^{(m)}(\omega^{0})},
$$
where $$\Omega_{C_{n}^{(m)}(\omega^{0})}=\sum\limits_{x=0}^{2\omega_{min}^{0}}\prod\limits_{i=1}^{n}\frac{(\omega_{i}^{0}!)^{2}}{(\omega_{i-1}^{0}+\omega_{min}^{0}-x)!(\omega_{i}^{0}-\omega_{min}^{0}+x)!}\sum\limits_{l=0}^{n-1}\prod\limits_{i=1}^{l}(\omega_{i}^{0}+\omega_{min}^{0}-x)
\prod\limits_{i=l+2}^{n}(\omega_{i}^{0}-\omega_{min}^{0}+x),
$$
$\omega_{min}^{0}=\mathrm{min}_{i\in n}\omega_{i}^{0}$, $\omega_{i}^{0}=\omega^{0}(e_{i}), i\in[n]$, $\mathcal{B}_{tree}(\mathcal{U})$ denotes the set of connected sub-hypergraphs of $\mathcal{U}$ which are hypertrees, $\mathcal{B}_{cycle}(\mathcal{U})$ denotes the set of connected sub-hypergraphs of $\mathcal{U}$ which contain the hypercycle.

If $m\nmid d$, then $\mathrm{S}_{d}(\mathcal{U})=0$.
\end{thm}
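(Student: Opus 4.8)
The plan is to derive (\ref{888}) from the general combinatorial trace formula for tensors and then exploit the rigidity forced by linearity and unicyclicity. First I would invoke the Morozov--Shakirov expression for $\mathrm{Tr}_d(\mathcal{A})$ \cite{2011Analogue}, which Hu et al.\ \cite{hu2013determinants} identified with the spectral moment $\mathrm{S}_d$, together with the Clark--Cooper reduction \cite{clark2021harary} that rewrites $\mathrm{Tr}_d(\mathcal{A}_\mathcal{U})$ as a weighted enumeration of edge-weighted closed sub-structures of $\mathcal{U}$. Concretely, every contributing term corresponds to an assignment $\omega\colon E(\mathcal{U})\to\mathbb{Z}^{+}$ of multiplicities to the edges it uses; since each hyperedge carries $m$ incidences and the closed structure has total length $d$, the total weight must satisfy $m\sum_{e}\omega(e)=d$, which already forces $\mathrm{S}_d(\mathcal{U})=0$ when $m\nmid d$ and fixes $\sum_{e}\omega(e)=d/m$ in the remaining case.

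Next I would isolate the \emph{support} of a contributing configuration, i.e.\ the connected sub-hypergraph on the edges with $\omega(e)>0$. Because $\mathcal{U}$ is linear, any two of its edges meet in at most one vertex, so the support is a well-behaved sub-hypergraph; because $\mathcal{U}$ is unicyclic, such a connected support either is a hypertree or contains the unique hypercycle. This dichotomy is exactly the partition $\mathcal{B}_{tree}(\mathcal{U})\sqcup\mathcal{B}_{cycle}(\mathcal{U})$ appearing in (\ref{888}). The global prefactor $(m-1)^{|V(\mathcal{U})|}$ against the factors $(m-1)^{-|V(\widehat{T})|}$ and $(m-1)^{-|V(\mathcal{G})|}$ inside $tr_d$ accounts for the vertices of $\mathcal{U}$ untouched by the support, each of which contributes an independent factor $(m-1)$ to the ambient count, while the leading coefficient $d$ records the choice of a root/starting incidence of the closed walk, a necklace-type normalization.

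The core computation is the weight of a single configuration with a fixed support and weighting $\omega$. For a tree support I would show the count factorizes over vertices and edges: following the standard BEST/Eulerian local analysis, each vertex $v$ contributes $(d_{v}(\widehat{T}(\omega))-1)!$ routings, each edge contributes $\omega(e)^{m-1}/(\omega(e)!)^{m}$ from the entries $\tfrac{1}{(m-1)!}$ of $\mathcal{A}_\mathcal{U}$ together with the multinomial arrangements of its repeated uses, and the power $m^{(m-2)|E(\widehat{T})|}$ collects the remaining constants; this is exactly $tr_d(\widehat{T})$, and the acyclicity of the tree guarantees there is no further global choice, so the product is complete. The bookkeeping formulas of \cite{shao2015some,doi:10.1080/03081087.2021.1953431} would guide this step.

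The main obstacle is the cycle term $tr_d(\mathcal{G})$, where the routing no longer factorizes: around the unique hypercycle the local choices are coupled, since a closed traversal may wind around the cycle and the windings redistribute the edge multiplicities. I would treat this by a transfer/generating-function argument along $C_{n}^{(m)}$, parametrizing configurations by the minimum cyclic weight $\omega_{min}^{0}$ and an excess parameter $x$ recording how this minimum is shifted between consecutive cycle edges; summing over $x\in\{0,\ldots,2\omega_{min}^{0}\}$ and over the break-point $l$ of the cyclic product yields precisely the factor $\Omega_{C_{n}^{(m)}(\omega^{0})}$, while the extra constant $2/m$ relative to the tree weight records the two orientations of the cycle and its single distinguished edge. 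Verifying that this cyclic sum is the only correction to the otherwise tree-like product, and that no spurious configurations arise in a linear hypergraph, is the delicate point; I would cross-check the normalization against the low-order values $\mathrm{S}_{m}$, $\mathrm{S}_{2m}$, $\mathrm{S}_{3m}$ available from the hypertree formulas above.
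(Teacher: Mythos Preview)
The paper does not prove this theorem at all: the statement is quoted verbatim from reference \cite{ocestrada} (note the \texttt{\textbackslash cite} in the theorem header) and is used as a black box to derive Corollaries \ref{w3} and \ref{sp11}. There is therefore no ``paper's own proof'' to compare your proposal against.

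Your outline is a plausible high-level roadmap for how such a formula is obtained---Morozov--Shakirov/Hu et al.\ to identify $\mathrm{S}_d$ with $\mathrm{Tr}_d$, Clark--Cooper to pass to Veblen-type weighted sub-hypergraphs, the divisibility $m\mid d$ from total incidence count, and the tree/cycle dichotomy forced by unicyclicity---but as written it is a sketch, not a proof. In particular, the derivation of the cycle correction $\Omega_{C_n^{(m)}(\omega^0)}$ is the substantive combinatorial content of the result, and your description (``transfer/generating-function argument'', ``excess parameter $x$'', ``break-point $l$'') names the shape of the answer without actually carrying out the Eulerian-circuit enumeration on the blown-up multigraph that produces those exact factorials and index ranges. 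Likewise the constants $m^{(m-2)|E|}$, the edge factor $\omega(e)^{m-1}/(\omega(e)!)^m$, and the extra $2/m$ in the cycle term each require a careful bookkeeping argument that you gesture at but do not execute. If you intend to supply an independent proof, those are the places where real work is needed; if you only need the statement, you may simply cite \cite{ocestrada} as the present paper does.
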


We give expressions of $2m$th and $3m$th order spectral moments of a linear unicyclic $m$-uniform  hypergraph in terms of the number of some sub-hypergraphs.
\begin{cor}\label{w3}
Let $\mathcal{U}$ be a linear unicyclic $m$-uniform  hypergraph. Then we have
$$
\mathrm{S}_{2m}(\mathcal{U})=m^{(m-1)}(m-1)^{|V(\mathcal{U})|-m}N_{\mathcal{U}}(P_{1}^{(m)})+2m^{2m-3}(m-1)^{|V(\mathcal{U})|-2m+1}N_{\mathcal{U}}(P_{2}^{(m)}).
$$
\end{cor}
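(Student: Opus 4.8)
The plan is to specialize the general trace formula of Theorem~\ref{sp8} to $d=2m$. Since $m\mid 2m$ and $2m\neq 0$, equation~\eqref{888} applies, and the weight constraint appearing in $tr_{d}$ becomes $\omega(\widehat{\mathcal{T}})=2$ for tree contributions and $\omega(\mathcal{G})=2$ for cycle contributions. Because $\omega$ takes values in $\mathbb{Z}^{+}$, every edge of a contributing sub-hypergraph carries weight at least $1$; hence any sub-hypergraph that contributes to $\mathrm{S}_{2m}(\mathcal{U})$ has at most two edges. This reduces the two sums in~\eqref{888} to a short, explicit list of sub-hypergraphs, and the rest of the argument is the evaluation of $tr_{2m}$ on that list.

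First I would show that the cycle term $\sum_{\mathcal{G}\in\mathcal{B}_{cycle}(\mathcal{U})}tr_{2m}(\mathcal{G})$ vanishes. Since $\mathcal{U}$ is linear, any two distinct edges meet in at most one vertex, so $\mathcal{U}$ has no hypercycle of length $1$ or $2$, and every $\mathcal{G}\in\mathcal{B}_{cycle}(\mathcal{U})$ has at least three edges. As each edge has weight at least $1$, such a $\mathcal{G}$ satisfies $\omega(\mathcal{G})\geq 3>2$, so the constraint $\omega(\mathcal{G})=2$ cannot be met and the cycle term is $0$. Thus $\mathrm{S}_{2m}(\mathcal{U})$ equals $2m(m-1)^{|V(\mathcal{U})|}$ times the tree sum alone.

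Next I would enumerate the hypertrees $\widehat{\mathcal{T}}\in\mathcal{B}_{tree}(\mathcal{U})$ admitting a weighting with $\omega(\widehat{\mathcal{T}})=2$. Having at most two edges and being connected, $\widehat{\mathcal{T}}$ is either a single edge carrying weight $2$, i.e.\ a copy of $P_{1}^{(m)}$, or two edges each carrying weight $1$; in the latter case connectedness together with linearity forces the two edges to share exactly one vertex, i.e.\ a copy of $P_{2}^{(m)}$. In both cases the admissible weighting is unique, so the inner sum over $\omega$ has a single term for each copy, and the number of copies is $N_{\mathcal{U}}(P_{1}^{(m)})$ and $N_{\mathcal{U}}(P_{2}^{(m)})$, respectively.

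Finally I would evaluate $tr_{2m}$ on each shape and combine. For $P_{1}^{(m)}$ one has $|V|=m$, $|E|=1$, every vertex has weighted degree $2$ so each factor $(d_{v}-1)!=1$, and the edge contributes $2^{m-1}/(2!)^{m}=\tfrac12$; for $P_{2}^{(m)}$ one has $|V|=2m-1$, $|E|=2$, the shared vertex has weighted degree $2$ and the remaining vertices degree $1$ so every vertex factorial is $1$, and each weight-$1$ edge contributes $1$. Multiplying by the prefactor $2m(m-1)^{|V(\mathcal{U})|}$, absorbing the factors $(m-1)^{-|V(\widehat{\mathcal{T}})|}$ and $m^{(m-2)|E(\widehat{\mathcal{T}})|}$, and summing over all copies produces exactly the two stated terms $m^{m-1}(m-1)^{|V(\mathcal{U})|-m}N_{\mathcal{U}}(P_{1}^{(m)})$ and $2m^{2m-3}(m-1)^{|V(\mathcal{U})|-2m+1}N_{\mathcal{U}}(P_{2}^{(m)})$. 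The only step that needs genuine care is the vanishing of the cycle contribution, which rests on linearity forcing girth at least $3$; the remainder is a bounded enumeration followed by routine arithmetic.
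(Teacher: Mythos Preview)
Your proposal is correct and follows essentially the same route as the paper: invoke Theorem~\ref{sp8} at $d=2m$, observe that linearity forces $g(\mathcal{U})\geq 3>2$ so the cycle sum in~\eqref{888} is empty, classify the contributing sub-hypertrees as $P_{1}^{(m)}$ (one edge of weight $2$) or $P_{2}^{(m)}$ (two edges of weight $1$), and evaluate $tr_{2m}$ on each. Your write-up is in fact slightly more explicit than the paper's in justifying the vanishing of the cycle term and the uniqueness of the weightings, but the argument is the same.
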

\begin{proof}
Since $2m/m<g(\mathcal{U})$, the second summand in (\ref{888}) does not appear.
By Theorem \ref{sp8}, we have
\begin{align*}
\mathrm{S}_{2m}(\mathcal{U})&=2m(m-1)^{|V(\mathcal{U})|}\sum\limits_{\mathcal{\widehat{T}}\in \mathcal{B}_{tree}(\mathcal{U})}\sum\limits_{\omega:\omega(\mathcal{\widehat{T}})=2}(m-1)^{-|V(\mathcal{\widehat{T}})|}m^{(m-2)|E(\mathcal{\widehat{T}})|}\\
&\prod\limits_{v\in V(\mathcal{\widehat{T}})}(d_{v}(\mathcal{\widehat{T}}(\omega))-1)!\prod\limits_{e\in E(\mathcal{\widehat{T}})}\frac{\omega(e)^{m-1}}{(\omega(e)!)^{m}}.
\end{align*}
Since $\omega(\mathcal{\widehat{T}})=\sum_{e\in E(\mathcal{\widehat{T}})}\omega(e)=2$, $\widehat{T}$ is an edge $e$ with $\omega(e)=2$ or $\widehat{T}$ is a hyperpath of length $2$ with $\omega(e_{i})=1, i\in[2]$, where $E(\mathcal{\widehat{T}})=\{e_{1},e_{2}\}$.
So
\begin{align*}
\mathrm{S}_{2m}(\mathcal{U})&=2m(m-1)^{|V(\mathcal{U})|}((m-1)^{-m}m^{(m-2)}\frac{2^{m-1}}{2^{m}}N_{\mathcal{U}}(P_{1}^{(m)})+(m-1)^{1-2m}m^{2(m-2)}N_{\mathcal{U}}(P_{2}^{(m)}))\\
&=m^{(m-1)}(m-1)^{|V(\mathcal{U})|-m}N_{\mathcal{U}}(P_{1}^{(m)})+2m^{2m-3}(m-1)^{|V(\mathcal{U})|-2m+1}N_{\mathcal{U}}(P_{2}^{(m)}).
\end{align*}
\end{proof}
\begin{cor}\label{sp11}
Let $\mathcal{U}$ be a linear unicyclic $m$-uniform  hypergraph with girth $g$ $(g>3)$. Then we have
\begin{align*}
\mathrm{S}_{3m}(\mathcal{U})
&=(m-1)^{|V(\mathcal{U})|-m}m^{m-1}N_{\mathcal{U}}(P_{1}^{(m)})+6m^{2m-3}(m-1)^{|V(\mathcal{U})|+1-2m}N_{\mathcal{U}}(P_{2}^{(m)})\\
&+3m^{3m-5}(m-1)^{|V(\mathcal{U})|+2-3m}N_{\mathcal{U}}(P_{3}^{(m)})+6m^{3m-5}(m-1)^{|V(\mathcal{U})|+2-3m}N_{\mathcal{U}}(S_{3}^{(m)}).
\end{align*}
Let $\mathcal{U}$ be a linear unicyclic $m$-uniform  hypergraph with girth $3$. Then we have
\begin{align*}
\mathrm{S}_{3m}(\mathcal{U})
&=(m-1)^{|V(\mathcal{U})|-m}m^{m-1}N_{\mathcal{U}}(P_{1}^{(m)})+6m^{2m-3}(m-1)^{|V(\mathcal{U})|+1-2m}N_{\mathcal{U}}(P_{2}^{(m)})\\
&+3m^{3m-5}(m-1)^{|V(\mathcal{U})|+2-3m}N_{\mathcal{U}}(P_{3}^{(m)})+6m^{3m-5}(m-1)^{|V(\mathcal{U})|+2-3m}N_{\mathcal{U}}(S_{3}^{(m)})\\
&+24m^{3m-6}(m-1)^{|V(\mathcal{U})|-3m+3}.
\end{align*}
\end{cor}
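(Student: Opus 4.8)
The plan is to specialize Theorem \ref{sp8} to $d=3m$, exactly as Corollary \ref{w3} specialized it to $d=2m$, and to organize the resulting weighted sub-hypergraphs by their common total weight $d/m=3$. Since $m\mid 3m$, Theorem \ref{sp8} applies and gives $\mathrm{S}_{3m}(\mathcal{U})=3m(m-1)^{|V(\mathcal{U})|}\big(\sum_{\widehat{T}}tr_{3m}(\widehat{T})+\sum_{\mathcal{G}}tr_{3m}(\mathcal{G})\big)$, where each inner trace already carries a sum over the weightings $\omega$ with $\omega(\cdot)=3$. The whole computation then reduces to enumerating which shapes admit such a weighting and evaluating the two products in the trace formula for each.

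First I would treat the tree term, which is the only surviving contribution when the girth exceeds $3$, since then $3=d/m<g$ forces $\mathcal{B}_{cycle}(\mathcal{U})$ to have no weight-$3$ members. A weighted sub-hypertree of total weight $3$ has at most three edges, so the admissible shapes are exactly: a single edge carrying weight $3$ (shape $P_1^{(m)}$); a two-edge path carrying weights $(2,1)$ or $(1,2)$ (shape $P_2^{(m)}$, two weightings); a three-edge path with all weights $1$ (shape $P_3^{(m)}$); and a three-edge star with all weights $1$ (shape $S_3^{(m)}$). For each shape I would compute the degree product $\prod_v(d_v(\widehat{T}(\omega))-1)!$, which equals $1$ except at a degree-$3$ vertex where it contributes $2!=2$, together with the edge product $\prod_e\omega(e)^{m-1}/(\omega(e)!)^m$. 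Multiplying by $3m(m-1)^{|V(\mathcal{U})|}$ and by the count $N_{\mathcal{U}}(\cdot)$ of each shape yields the four stated terms; these computations run parallel to Corollary \ref{w3}.

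For girth exactly $3$ the cycle term activates because now $d/m=3=g$, and the only connected sub-hypergraph containing the cycle with total weight $3$ is the triangle $C_3^{(m)}$ itself, with all three edge weights equal to $1$ (any proper superhypergraph has weight at least $4$). I would then evaluate $tr_{3m}(C_3^{(m)})$: its vertex and edge products are both $1$, its prefactor is $2(m-1)^{-(3m-3)}m^{3m-7}$, and the remaining ingredient is the combinatorial factor $\Omega_{C_3^{(m)}(\omega^0)}$ with $n=3$ and all $\omega_i^0=1$. Multiplying $3m(m-1)^{|V(\mathcal{U})|}$ by this single contribution produces the extra term $24m^{3m-6}(m-1)^{|V(\mathcal{U})|-3m+3}$.

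The main obstacle is the explicit evaluation of $\Omega_{C_3^{(m)}(\omega^0)}$, a nested double sum over $x\in\{0,1,2\}$ and $l\in\{0,1,2\}$ with cyclically indexed factorials. With all $\omega_i^0=1$ and $\omega_{min}^0=1$ the inner factor collapses to $x^2+(2-x)x+(2-x)^2$, weighted by $\prod_{i=1}^3 1/((2-x)!\,x!)$, and summing over $x$ should give $\Omega=4$, which is precisely what forces the constant $24$. Carefully confirming this value, and verifying that no other weighted cycle sub-hypergraph contributes at weight $3$, is the only genuinely new piece beyond the pattern already established in Corollary \ref{w3}.
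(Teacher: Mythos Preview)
Your proposal is correct and follows precisely the same route as the paper: specialize Theorem~\ref{sp8} at $d=3m$, enumerate the four weighted sub-hypertree shapes with total weight $3$ (single edge of weight $3$, $P_2^{(m)}$ with the two $(1,2)$ weightings, $P_3^{(m)}$, $S_3^{(m)}$), and for girth $3$ add the single cycle contribution from $C_3^{(m)}$ with unit weights. Your explicit verification that $\Omega_{C_3^{(m)}(\omega^0)}=4$ via the sum over $x\in\{0,1,2\}$ is actually more detailed than the paper, which simply asserts this value; just be careful that for the single edge with $\omega(e)=3$ \emph{every} vertex has weighted degree $3$, so the degree product there is $(2!)^m$ rather than a single factor of $2$.
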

\begin{proof}
When $g>3$, since $3m/m<g$, the second summand in (\ref{888}) does not appear.
By Theorem \ref{sp8}, we have
\begin{align*}
\mathrm{S}_{3m}(\mathcal{U})&=3m(m-1)^{|V(\mathcal{U})|}\sum\limits_{\mathcal{\widehat{T}}\in \mathcal{B}_{tree}(\mathcal{U})}\sum\limits_{\omega:\omega(\mathcal{\widehat{T}})=3}(m-1)^{-|V(\mathcal{\widehat{T}})|}m^{(m-2)|E(\mathcal{\widehat{T}})|}\\
&\prod\limits_{v\in V(\mathcal{\widehat{T}})}(d_{v}(\mathcal{\widehat{T}}(\omega))-1)!\prod\limits_{e\in E(\mathcal{\widehat{T}})}\frac{\omega(e)^{m-1}}{(\omega(e)!)^{m}}.
\end{align*}

Since $\omega(\mathcal{\widehat{T}})=\sum_{e\in E(\mathcal{\widehat{T}})}\omega(e)=3$, we have \\
(1). $\widehat{T}$ is an edge $e$ with $\omega(e)=3$; \\
(2). $\widehat{T}$ is a hyperpath of length $2$ with $\omega(e_{1})=1$, $\omega(e_{2})=2$ or $\omega(e_{1})=2$, $\omega(e_{2})=1$, where $E(\mathcal{\widehat{T}})=\{e_{1},e_{2}\}$; \\
(3). $\widehat{T}$ is a hyperpath of length $3$ with $\omega(e_{i})=1, i\in[3]$, where $E(\mathcal{\widehat{T}})=\{e_{1},e_{2},e_{3}\}$;\\
(4). $\widehat{T}$ is a hyperstar  with $3$ edges and $\omega(e_{i})=1, i\in[3]$, where $E(\mathcal{\widehat{T}})=\{e_{1},e_{2},e_{3}\}$.

Therefore,
\begin{align*}
\mathrm{S}_{3m}(\mathcal{U})&=3m(m-1)^{|V(\mathcal{U})|}((m-1)^{-m}m^{(m-2)}(2!)^{m}\frac{3^{m-1}}{(3!)^{m}}N_{\mathcal{U}}(P_{1}^{(m)})\\
&+(m-1)^{1-2m}m^{2(m-2)}2!\frac{2^{m-1}}{(2!)^{m}}2N_{\mathcal{U}}(P_{2}^{(m)})\\
&+(m-1)^{2-3m}m^{3(m-2)}N_{\mathcal{U}}(P_{3}^{(m)})+(m-1)^{2-3m}m^{3(m-2)}2!N_{\mathcal{U}}(S_{3}^{(m)}))\\
&=(m-1)^{|V(\mathcal{U})|-m}m^{m-1}N_{\mathcal{U}}(P_{1}^{(m)})+6m^{2m-3}(m-1)^{|V(\mathcal{U})|+1-2m}N_{\mathcal{U}}(P_{2}^{(m)})\\
&+3m^{3m-5}(m-1)^{|V(\mathcal{U})|+2-3m}N_{\mathcal{U}}(P_{3}^{(m)})+6m^{3m-5}(m-1)^{|V(\mathcal{U})|+2-3m}N_{\mathcal{U}}(S_{3}^{(m)}).
\end{align*}
When $g=3$,
since $\omega(\mathcal{\widehat{T}})=\sum_{e\in E(\mathcal{\widehat{T}})}\omega(e)=3$ , we have \\
(1). $\widehat{T}$ is an edge $e$ with $\omega(e)=3$;\\
(2). $\widehat{T}$ is a hyperpath of length $2$ with $\omega(e_{1})=1$, $\omega(e_{2})=2$ or $\omega(e_{1})=2$, $\omega(e_{2})=1$, where $E(\mathcal{\widehat{T}})=\{e_{1},e_{2}\}$;\\
(3). $\widehat{T}$ is a hyperpath of length $3$ with $\omega(e_{i})=1, i\in[3]$, where $E(\mathcal{\widehat{T}})=\{e_{1},e_{2},e_{3}\}$;\\
(4). $\widehat{T}$ is a hyperstar with $3$ edges and $\omega(e_{i})=1, i\in[3]$, where $E(\mathcal{\widehat{T}})=\{e_{1},e_{2},e_{3}\}$.

Since $\omega(\mathcal{G})=\sum_{e\in E(\mathcal{G})}\omega(e)=3$, $\mathcal{G}$ is a hypercycle with girth $3$, $\omega_{i}^{0}=\omega^{0}(e_{i})=1, i\in[3]$ and $\Omega_{C_{3}^{(m)}(\omega^{0})}=4$, where $E(\mathcal{G})=\{e_{1},e_{2},e_{3}\}$.
By Theorem \ref{sp8}, we have
\begin{align*}
\mathrm{S}_{3m}(\mathcal{U})&=3m(m-1)^{|V(\mathcal{U})|}((m-1)^{-m}m^{(m-2)}(2!)^{m}\frac{3^{m-1}}{(3!)^{m}}N_{\mathcal{U}}(P_{1}^{(m)})\\
&+(m-1)^{1-2m}m^{2(m-2)}2!\frac{2^{m-1}}{(2!)^{m}}2N_{\mathcal{U}}(P_{2}^{(m)})+(m-1)^{2-3m}m^{3(m-2)}N_{\mathcal{U}}(P_{3}^{(m)})\\
&+(m-1)^{2-3m}m^{3(m-2)}2!N_{\mathcal{U}}(S_{3}^{(m)})
+2(m-1)^{-3m+3}m^{3(m-2)-1}4) \\
&=(m-1)^{|V(\mathcal{U})|-m}m^{m-1}N_{\mathcal{U}}(P_{1}^{(m)})+6m^{2m-3}(m-1)^{|V(\mathcal{U})|+1-2m}N_{\mathcal{U}}(P_{2}^{(m)})\\
&+3m^{3m-5}(m-1)^{|V(\mathcal{U})|+2-3m}N_{\mathcal{U}}(P_{3}^{(m)})+6m^{3m-5}(m-1)^{|V(\mathcal{U})|+2-3m}N_{\mathcal{U}}(S_{3}^{(m)})\\
&+24m^{3m-6}(m-1)^{|V(\mathcal{U})|-3m+3}.
\end{align*}
\end{proof}

The set of all linear unicyclic $m$-uniform  hypergraphs with $e+f$ edges which contain a hypercycle $C_{e}^{(m)}$ will be denoted by $\textbf{U}^{m}_{ef}$.
Let $F^{(m)}_{ef}$ be the linear unicyclic $m$-uniform  hypergraph obtained from the hypercycle $C^{(m)}_{e}$ by
attached $f$ pendant edges to one of non core vertices on $C^{(m)}_{e}$.
The following theorem gives the last hypergraph in an $S$-order of all linear unicyclic $m$-uniform
hypergraphs with given girth.

\begin{thm}\label{sp9}
In an $S$-order of $\textbf{U}^{m}_{ef}$ the last hypergraph is $F^{(m)}_{ef}$.
\end{thm}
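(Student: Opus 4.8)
The plan is to reduce the $S$-ordering to the maximization of a single graph parameter. Every hypergraph in $\textbf{U}^{m}_{ef}$ has $(e+f)(m-1)$ vertices, $e+f$ edges, and girth $e\ (\geq 3)$, so by Lemma \ref{L2} and Theorem \ref{sp8} the moments $\mathrm{S}_{0},\mathrm{S}_{1},\ldots,\mathrm{S}_{2m-1}$ agree for all of them: the moments with $m\nmid d$ vanish, $\mathrm{S}_{m}$ depends only on the edge count, and no hypercycle can contribute before $\mathrm{S}_{em}$ because such a $\mathcal{G}\in\mathcal{B}_{cycle}$ satisfies $\omega(\mathcal{G})\geq e>2$. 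Hence the first moment that can separate two members is $\mathrm{S}_{2m}$. By Corollary \ref{w3}, $\mathrm{S}_{2m}(\mathcal{U})$ is an affine function of $N_{\mathcal{U}}(P_{2}^{(m)})$ with positive leading coefficient, and since $\mathcal{U}$ is linear, counting the pairs of edges through each vertex gives $N_{\mathcal{U}}(P_{2}^{(m)})=\sum_{v}\binom{d_{v}}{2}=\tfrac12 M(\mathcal{U})-\tfrac12 m(e+f)$. Therefore $\mathrm{S}_{2m}(\mathcal{U})$ is strictly increasing in the Zagreb index $M(\mathcal{U})$, and it suffices to prove that $F^{(m)}_{ef}$ is the unique maximizer of $M$ over $\textbf{U}^{m}_{ef}$.

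To do this I would transform an arbitrary $\mathcal{U}\in\textbf{U}^{m}_{ef}$ into $F^{(m)}_{ef}$ by a finite sequence of moves, each strictly raising $M$ while staying inside $\textbf{U}^{m}_{ef}$. First, exactly as in the proof of Theorem \ref{sp6}, I apply transformation 1 repeatedly inside each tree hanging off the hypercycle, at each stage choosing a deepest internal vertex whose non-pendant degree is one, thereby pushing pendant edges toward the cycle until every edge not on $C^{(m)}_{e}$ is a pendant edge attached at a vertex of $C^{(m)}_{e}$. Next, whenever a pendant edge is attached at a core vertex $c$ of the cycle, I apply transformation 1 with $u=c$ and $v$ the non-core (degree-two) cycle vertex sharing the cycle edge with $c$, moving those pendant edges onto $v$. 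After this all pendant edges sit on non-core vertices; finally, while two distinct non-core vertices both carry pendant edges, transformation 2 moves them all onto the one of larger degree. By Lemmas \ref{sp1} and \ref{sp2} each such move strictly increases $M$, and each preserves linearity, the edge count, and the hypercycle $C^{(m)}_{e}$, so membership in $\textbf{U}^{m}_{ef}$ is maintained throughout.

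The process terminates exactly at the configuration in which all $f$ extra edges are pendant edges attached to a single non-core vertex of $C^{(m)}_{e}$, namely $F^{(m)}_{ef}$. I would then verify that $F^{(m)}_{ef}$ admits no further move: its only vertex carrying pendant edges is the distinguished vertex $w^{*}$, whose degree is $f+2$ rather than $f+1$ (it lies on two cycle edges), so transformation 1 does not apply to it, and since no second vertex carries pendant edges, transformation 2 does not apply either. Hence $M(\mathcal{U})<M(F^{(m)}_{ef})$ for every $\mathcal{U}\neq F^{(m)}_{ef}$ in $\textbf{U}^{m}_{ef}$, so $\mathrm{S}_{2m}(\mathcal{U})<\mathrm{S}_{2m}(F^{(m)}_{ef})$, and combined with the equality of all lower moments this yields $\mathcal{U}\prec_{s}F^{(m)}_{ef}$, i.e. $F^{(m)}_{ef}$ is last.

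The step I expect to require the most care is confirming that each transformation keeps the hypergraph in $\textbf{U}^{m}_{ef}$ — in particular that relocating pendant edges never creates two edges meeting in more than one vertex and never disturbs the hypercycle — together with the guarantee that the reduction genuinely funnels every pendant edge onto a non-core cycle vertex, so that the terminal hypergraph is $F^{(m)}_{ef}$ and not some configuration with pendant edges stranded on a core vertex. Making the tree-collapse stage precise through the "deepest internal vertex of non-pendant degree one" choice is the bookkeeping that converts this outline into a rigorous termination argument.
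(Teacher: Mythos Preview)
Your proposal is correct and follows essentially the same route as the paper: reduce to $\mathrm{S}_{2m}$ via Corollary~\ref{w3}, rewrite $N_{\mathcal{U}}(P_{2}^{(m)})$ in terms of the Zagreb index, and then push every $\mathcal{U}\in\textbf{U}^{m}_{ef}$ to $F^{(m)}_{ef}$ by iterating transformation~1 (collapse the hanging trees and move pendant edges off core cycle vertices) followed by transformation~2 (gather all pendant edges at a single non-core vertex), invoking Lemmas~\ref{sp1} and~\ref{sp2} for the strict increase of $M$. Your write-up is actually more careful than the paper's in two respects---you justify explicitly why pendant edges on core cycle vertices can be moved to non-core ones via transformation~1, and you check that $F^{(m)}_{ef}$ admits no further move---but the underlying argument is the same.
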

\begin{proof}
Since in $\textbf{U}^{m}_{ef}$ the spectral moments $\mathrm{S}_{0},\mathrm{S}_{1},\ldots,\mathrm{S}_{2m-1}$ are the same, the first significant spectral moment is the $2m$th. By Corollary \ref{w3}, $\mathrm{S}_{2m}$ is determined by the number of $P_{2}^{(m)}$. The number of vertices of linear unicyclic $m$-uniform  hypergraphs with $e+f$ edges is $(e+f)(m-1)$. For any $\mathcal{U}\in \textbf{U}^{m}_{ef}$, we have
$$
N_{\mathcal{U}}(P_{2}^{(m)})=\sum\limits_{i=1}^{em+fm-e-f}{d_{i}\choose2}=\frac{1}{2}\sum\limits_{i=1}^{em+fm-e-f}d_{i}^{2}-\frac{em+fm}{2}=\frac{1}{2}M(\mathcal{U})-\frac{em+fm}{2},
$$
where $d_{1}+d_{2}+\cdots+d_{em+fm-e-f}=em+fm$.

Repeating transformation 1, any linear unicyclic $m$-uniform  hypergraph in $\textbf{U}^{m}_{ef}$ can be changed into a linear unicyclic $m$-uniform  hypergraph such that all the edges not on $C^{(m)}_{e}$ are pendant edges and incident with non core vertices of $C^{(m)}_{e}$.

After repeating transformation 1, if we repeat transformation 2, any linear unicyclic $m$-uniform  hypergraph in $\textbf{U}^{m}_{ef}$
can be changed into a linear unicyclic $m$-uniform  hypergraph obtained from the hypercycle $C^{(m)}_{e}$ by
attached $f$ pendant edges to one of non core vertices on $C^{(m)}_{e}$.

%
From Lemma \ref{sp1} and Lemma \ref{sp2},
each application of transformation 1 or 2 strictly increases the Zagreb index.
Hence, in an $S$-order of $\textbf{U}^{m}_{ef}$ the last hypergraph is $F^{(m)}_{ef}$.

\end{proof}
The set of all linear unicyclic $m$-uniform  hypergraphs with $q$ edges will be denoted by $\textbf{U}_{q}$. The following theorem gives the last hypergraph in an $S$-order of all linear unicyclic $m$-uniform
hypergraphs.
\begin{thm}
In an $S$-order of $\textbf{U}_{q}$ the last hypergraph is $F^{(m)}_{3(q-3)}$.
\end{thm}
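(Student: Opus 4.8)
The plan is to reduce the global maximization over $\textbf{U}_{q}$ to the girth-fixed result of Theorem \ref{sp9} followed by a single one-variable monotonicity check. First I would partition $\textbf{U}_{q}=\bigcup_{e=3}^{q}\textbf{U}^{m}_{e,q-e}$ according to the girth $e$ of the unique hypercycle, using that a linear hypercycle has length at least $3$. Every $\mathcal{U}\in\textbf{U}_{q}$ has $q$ edges and $q(m-1)$ vertices, so the spectral moments $\mathrm{S}_{0},\ldots,\mathrm{S}_{2m-1}$ coincide for all of them and the first moment that can separate them is $\mathrm{S}_{2m}$. By Corollary \ref{w3} together with the identity $N_{\mathcal{U}}(P_{2}^{(m)})=\tfrac{1}{2}M(\mathcal{U})-\tfrac{qm}{2}$ (exactly as in the proof of Theorem \ref{sp9}), $\mathrm{S}_{2m}(\mathcal{U})$ is a strictly increasing affine function of the Zagreb index $M(\mathcal{U})$, and this relationship is uniform across all girths. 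Hence the last hypergraph in the $S$-order of $\textbf{U}_{q}$ is precisely the unique maximizer of $M$ over $\textbf{U}_{q}$, once such a unique maximizer is identified.

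Next I would locate that maximizer. By Theorem \ref{sp9}, inside each class $\textbf{U}^{m}_{e,q-e}$ the Zagreb index is strictly maximized by $F^{(m)}_{e,q-e}$, the terminal hypergraph of the strictly Zagreb-increasing transformations 1 and 2. Therefore the global maximizer of $M$ lies among the finitely many hypergraphs $\{F^{(m)}_{e,q-e}:3\le e\le q\}$, and it suffices to compare these. Reading off the degree sequence of $F^{(m)}_{e,q-e}$ — one junction vertex of degree $2+(q-e)$, the remaining $e-1$ junction vertices of degree $2$, and all other vertices of degree $1$ — gives
\[
M\bigl(F^{(m)}_{e,q-e}\bigr)=(q-e+2)^{2}+3e+qm-q-4 .
\]
Viewing the right-hand side as a function of a real variable $e\in[3,q]$, its derivative $-2(q-e+2)+3=2e-2q-1\le -1<0$ shows that it is strictly decreasing in $e$, so $M\bigl(F^{(m)}_{e,q-e}\bigr)$ is maximized uniquely at $e=3$.

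Finally I would assemble the pieces. Any $\mathcal{U}\in\textbf{U}^{m}_{e,q-e}$ with $e>3$ satisfies $M(\mathcal{U})\le M\bigl(F^{(m)}_{e,q-e}\bigr)<M\bigl(F^{(m)}_{3,q-3}\bigr)$, while any $\mathcal{U}\in\textbf{U}^{m}_{3,q-3}$ other than $F^{(m)}_{3,q-3}$ satisfies $M(\mathcal{U})<M\bigl(F^{(m)}_{3,q-3}\bigr)$ by Theorem \ref{sp9}. Thus $F^{(m)}_{3,q-3}$ is the strict global maximizer of the Zagreb index over $\textbf{U}_{q}$, hence of $\mathrm{S}_{2m}$, and therefore the last hypergraph in the $S$-order of $\textbf{U}_{q}$. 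The only genuinely nontrivial step is the monotonicity computation; everything else is bookkeeping that combines Corollary \ref{w3} and Theorem \ref{sp9}. The main obstacle is getting the exact degree sequence of $F^{(m)}_{e,q-e}$ right, since the closed form for $M$ — and hence its strict decrease in $e$ — hinges on it; note in particular that the terms involving $m$ cancel, so the optimal girth $e=3$ is independent of the uniformity $m$.
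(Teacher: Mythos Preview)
Your proposal is correct and follows essentially the same approach as the paper: partition by girth, invoke Theorem~\ref{sp9} to reduce to comparing the $F^{(m)}_{e,q-e}$, compute $M\bigl(F^{(m)}_{e,q-e}\bigr)$ explicitly, and differentiate in $e$ to find the strict maximum at $e=3$. Your write-up is in fact more careful than the paper's in making explicit why the comparison across different girths still comes down to the Zagreb index (same vertex count, so $\mathrm{S}_{2m}$ is a common strictly increasing affine function of $M$), a point the paper leaves implicit.
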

\begin{proof}
By Theorem \ref{sp9}, we get that in an $S$-order of $\textbf{U}^{m}_{l(q-l)}$ the last hypergraph is $F^{(m)}_{l(q-l)}$.
By the definition of the Zagreb index, we have
$M(F_{l(q-l)}^{(m)})=(m-2)l+(q-l)(m-1)+4(l-1)+(q-l+2)^{2}=l^{2}-l-2ql+qm+3q+q^{2}, 3\leq l\leq q$. Since the derivative of $M(F_{l(q-l)}^{(m)})$ over $l$ is equal to $2l-1-2q<0$, $M(F_{l(q-l)}^{(m)})\leq M(F^{(m)}_{3(q-3)})$ for $3\leq l\leq q$ with the equality if and only if $l=3$. Hence, in an $S$-order of $\textbf{U}_{q}$ the last hypergraph is $F^{(m)}_{3(q-3)}$.
\end{proof}

For $m\geq3,$ let $\textbf{U}$ be the set of all linear unicyclic $m$-uniform  hypergraphs with $e+f$ edges and girth $e$ such that the degree of all the vertices is less than or equal to $2$.
We characterize the first few hypergraphs
in the $S$-order of all linear unicyclic $m$-uniform
hypergraphs with given girth.

\begin{thm}\label{sp10}
For $m\geq3,$
$$
\textbf{U}\prec_{s}\textbf{U}^{m}_{ef}\setminus \textbf{U}.
$$
\end{thm}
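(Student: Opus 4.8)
The plan is to collapse the whole comparison to the single moment $\mathrm{S}_{2m}$ and to read that moment off from the Zagreb index, exactly as in the proofs of Theorems \ref{sp6}, \ref{zsy1} and \ref{sp9}. First I would record that every hypergraph in $\textbf{U}^{m}_{ef}$ has the same vertex count $(e+f)(m-1)$ and the same edge count $e+f$. Hence $\mathrm{S}_{0}=n(m-1)^{n-1}$ is constant, $\mathrm{S}_{m}$ is constant by Lemma \ref{L2}(2), $\mathrm{S}_{d}=0$ for $d=1,\ldots,m-1$ by Lemma \ref{L2}(1), and $\mathrm{S}_{d}=0$ for $d=m+1,\ldots,2m-1$ since $m\nmid d$ in that range (Theorem \ref{sp8}). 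Thus $\mathrm{S}_{0},\mathrm{S}_{1},\ldots,\mathrm{S}_{2m-1}$ coincide throughout $\textbf{U}^{m}_{ef}$, so the first possibly-distinguishing moment is $\mathrm{S}_{2m}$. By Corollary \ref{w3}, and because $N_{\mathcal{U}}(P_{1}^{(m)})=e+f$ is constant while $N_{\mathcal{U}}(P_{2}^{(m)})=\tfrac12 M(\mathcal{U})-\tfrac12(e+f)m$, the quantity $\mathrm{S}_{2m}(\mathcal{U})$ is a strictly increasing affine function of $M(\mathcal{U})$. The statement therefore reduces to showing that $M$ is minimized precisely on $\textbf{U}$.

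Next I would pin down the extremal Zagreb value. The degree sum $\sum_{i}d_{i}=(e+f)m$ and the vertex count $(e+f)(m-1)$ are both fixed, so the degree excess $\sum_{i}(d_{i}-1)=e+f$ is fixed too. For a member of $\textbf{U}$ all degrees lie in $\{1,2\}$, which forces exactly $e+f$ vertices of degree $2$ and $(e+f)(m-2)$ vertices of degree $1$; hence every hypergraph in $\textbf{U}$ shares the single value $M=(e+f)(m-2)+4(e+f)=(e+f)(m+2)$. Since $S/N=m/(m-1)\in(1,2)$ for $m\geq 3$, the balanced (binary) degree sequence is the unique majorization-minimal, hence unique $\sum_{i}d_{i}^{2}$-minimal, admissible sequence; any $\mathcal{U}\in\textbf{U}^{m}_{ef}\setminus\textbf{U}$ has a vertex of degree $\geq 3$, so its sequence is unbalanced and $M(\mathcal{U})>(e+f)(m+2)$ strictly.

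Equivalently, and to stay within the constructive framework of the earlier proofs, I would exhibit this strict inequality by moving edges: starting from any $\mathcal{U}\in\textbf{U}^{m}_{ef}\setminus\textbf{U}$, repeatedly apply Transformation 3 to the tree branches hanging off the cycle and Transformation 4 to the cycle-incident vertices of degree $\geq 3$, driving the maximum degree down to $2$ and terminating at a member of $\textbf{U}$; by Lemmas \ref{sp4} and \ref{sp5} each step strictly decreases $M$. Either way, for every $\mathcal{U}_{1}\in\textbf{U}$ and $\mathcal{U}_{2}\in\textbf{U}^{m}_{ef}\setminus\textbf{U}$ we obtain $M(\mathcal{U}_{1})<M(\mathcal{U}_{2})$, hence $\mathrm{S}_{2m}(\mathcal{U}_{1})<\mathrm{S}_{2m}(\mathcal{U}_{2})$ while all lower moments agree, which is exactly $\mathcal{U}_{1}\prec_{s}\mathcal{U}_{2}$, giving $\textbf{U}\prec_{s}\textbf{U}^{m}_{ef}\setminus\textbf{U}$.

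The reduction to $M$ is routine. The main obstacle is the structural bookkeeping of the constructive route: I must check that each application of Transformation 3 or 4 keeps the hypergraph linear and unicyclic, preserves the cycle $C_{e}^{(m)}$ (so the girth stays $e$) and the edge count $e+f$, and that the process halts only when every degree is at most $2$. The Schur-convexity argument sidesteps this by reasoning purely on degree sequences, but it still requires that $\textbf{U}$ be nonempty, i.e. that a binary linear unicyclic $m$-uniform hypergraph of girth $e$ with $e+f$ edges exists; I would confirm this directly by threading the $f$ extra edges into a single path rooted at a core (degree-one) vertex of $C_{e}^{(m)}$, which keeps the maximum degree equal to $2$ and leaves the girth equal to $e$.
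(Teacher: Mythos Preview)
Your proposal is correct and its core line is exactly the paper's: reduce the comparison to $\mathrm{S}_{2m}$, translate $\mathrm{S}_{2m}$ into the Zagreb index via Corollary~\ref{w3}, and then show that $\textbf{U}$ minimizes $M$ by driving any $\mathcal{U}\notin\textbf{U}$ into $\textbf{U}$ via Transformations~3 and~4, invoking Lemmas~\ref{sp4} and~\ref{sp5} for the strict decrease. The paper's proof is precisely this, stated tersely by reference to Theorem~\ref{sp9}.

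Your majorization paragraph is a genuine alternative the paper does not give: fixing the degree sum $(e+f)m$ and the vertex count $(e+f)(m-1)$, you observe that the unique $\{1,2\}$-valued positive-integer sequence with those parameters is majorization-minimal and hence the unique minimizer of $\sum d_i^2$; any $\mathcal{U}\in\textbf{U}^{m}_{ef}\setminus\textbf{U}$ has a vertex of degree $\geq 3$, so its degree sequence is unbalanced and $M(\mathcal{U})>(e+f)(m+2)$ strictly. This delivers the inequality without moving a single edge, and in particular sidesteps exactly the structural bookkeeping (linearity, girth, edge count preserved under each move; termination only when $\Delta\leq 2$) that you correctly flag as the delicate part of the constructive route and that the paper leaves implicit. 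The transformation argument has the virtue of being uniform with the paper's other proofs; your majorization argument is shorter and self-contained once the degree-sequence constraints are written down.
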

\begin{proof}

As in the proof of Theorem \ref{sp9} we pay attention to the Zagreb index.
Repeating transformation 3, any $m$-uniform hypertree attached to an $m$-uniform hypergraph $\mathcal{H}$ can be changed into a binary $m$-uniform hypertree.
After repeating transformation 3, if we repeat transformation 4, then any linear unicyclic $m$-uniform  hypergraph in $\textbf{U}^{m}_{ef}$ can be changed into a linear unicyclic $m$-uniform  hypergraph in $\textbf{U}$.
And from Lemma \ref{sp4} and Lemma \ref{sp5}, the Zagreb indices decrease.
Hence, we have $\textbf{U}\prec_{s}\textbf{U}^{m}_{ef}\setminus \textbf{U}.$
\end{proof}
We give a
transformation which will decrease the number of sub-hyperpaths with $3$ edges of hypergraphs as follows:

$\textbf{Transformation 5}$: Let $\mathcal{P}_{i}\neq P_{0}^{(m)}$ be an $m$-uniform hyperpath, $u_{i}$ be a pendent vertex of $\mathcal{P}_{i}$ for each $i\in[p]$ and $v_{1},v_{2},\ldots,v_{e(m-2)}$ be core vertices of a linear $m$-uniform hypercycle $C^{(m)}_{e}$, where $m\geq 3$ and $2\leq p\leq e(m-2)$. Let $\mathcal{H}_{1}=C^{(m)}_{e}(v_{1},\ldots,v_{p})\bigodot(\mathcal{P}_{1}(u_{1}),\ldots,\mathcal{P}_{p}(u_{p}))$.
Suppose that  $u_{1}\in e_{1}$ in $\mathcal{P}_{1}$, $w_{1}\in V(\mathcal{P}_{2})$ is a pendent vertex of $\mathcal{H}_{1}$,
let $\mathcal{H}_{2}$ be obtained from $\mathcal{H}_{1}$ by deleting $e_{1}$ and adding $(e_{1}\setminus\{u_{1}\})\bigcup\{w_{1}\}$.
\begin{lem}\label{zsy2}
Let $\mathcal{H}_{2}$ be obtained from $\mathcal{H}_{1}$ by transformation 5. Then $N_{\mathcal{H}_{2}}(P_{3}^{(m)})<N_{\mathcal{H}_{1}}(P_{3}^{(m)})$.
\end{lem}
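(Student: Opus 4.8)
The plan is to compare $N_{\mathcal{H}_1}(P_3^{(m)})$ and $N_{\mathcal{H}_2}(P_3^{(m)})$ by the same local bookkeeping used in Lemma \ref{z1}, exploiting that $\mathcal{H}_2$ differs from $\mathcal{H}_1$ only in the single edge that is moved. Since $\mathcal{H}_2$ is obtained from $\mathcal{H}_1$ by deleting $e_1$ and adding $e_1'=(e_1\setminus\{u_1\})\cup\{w_1\}$, every edge other than $e_1$ survives unchanged. Consequently, any copy of $P_3^{(m)}$ that avoids $e_1$ in $\mathcal{H}_1$ is a copy avoiding $e_1'$ in $\mathcal{H}_2$, and conversely; these contribute the same amount to both counts. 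Hence
\[
N_{\mathcal{H}_1}(P_3^{(m)})-N_{\mathcal{H}_2}(P_3^{(m)})=A_1-A_2,
\]
where $A_1$ (resp. $A_2$) is the number of sub-hyperpaths with $3$ edges using $e_1$ in $\mathcal{H}_1$ (resp. using $e_1'$ in $\mathcal{H}_2$), so it suffices to prove $A_1>A_2$.

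Next I would enumerate the $3$-paths through the moved edge via its two neighbours. Let $c$ be the unique cycle edge of $C_e^{(m)}$ containing the core vertex $v_1$, let $f_2$ and $f_3$ be the second and third edges of $\mathcal{P}_1$ (present iff $|E(\mathcal{P}_1)|\geq2$, resp. $\geq3$), and let $g$ be the terminal edge of $\mathcal{P}_2$ containing $w_1$. In $\mathcal{H}_1$ the edge $e_1$ is adjacent only to $c$ (through $v_1$) and to $f_2$; in $\mathcal{H}_2$ the edge $e_1'$ is adjacent only to $g$ (through $w_1$) and to $f_2$. Sorting a $3$-path through the moved edge by whether that edge is an end or the middle gives, in each case, exactly: one end-path $e_1 f_2 f_3$ resp. $e_1' f_2 f_3$; one middle-path $f_2 e_1 c$ resp. $f_2 e_1' g$; and the end-paths $e_1 c Z$ resp. $e_1' g Z'$ with $Z$ (resp. $Z'$) ranging over edges adjacent to $c$ (resp. $g$) other than the moved edge. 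Because $v_1$ has degree $2$ in $\mathcal{H}_1$ (and $w_1$ degree $2$ in $\mathcal{H}_2$), every such $Z$ and $Z'$ is automatically disjoint from the moved edge, so these families are counted without restriction. The first two contributions are identical in $\mathcal{H}_1$ and $\mathcal{H}_2$, so they cancel, leaving $A_1-A_2=\delta_1-\delta_2$, where $\delta_1$ is the number of edges adjacent to $c$ other than $e_1$ and $\delta_2$ the number adjacent to $g$ other than $e_1'$.

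Finally I would bound the two quantities. Since a linear hypercycle has girth at least $3$, the cycle edge $c$ has two distinct cycle-neighbours, so $\delta_1\geq2$; on the other hand $g$ is the terminal edge of the pendant hyperpath $\mathcal{P}_2$, so in $\mathcal{H}_2$ its only neighbours are $e_1'$ and, if $|E(\mathcal{P}_2)|\geq2$, its predecessor along $\mathcal{P}_2$, whence $\delta_2\leq1$. Therefore $A_1-A_2=\delta_1-\delta_2\geq1>0$, giving $N_{\mathcal{H}_2}(P_3^{(m)})<N_{\mathcal{H}_1}(P_3^{(m)})$, as claimed.

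I expect the main obstacle to be the exhaustive classification of $3$-paths through the moved edge, and in particular checking that no ``long-range'' copy of $P_3^{(m)}$ is lost or double-counted when $v_1$ and $v_2$ lie on the same or adjacent cycle edges; the resolution is that precisely these paths are the ones captured by the term $\delta_1$. A secondary point to pin down is the role of the word \emph{pendent}: the bound $\delta_2\leq1$ relies on $w_1$ being an end vertex of $\mathcal{P}_2$, so that the enlarged branch at $v_2$ is again a hyperpath; were $w_1$ an interior pendent vertex of $\mathcal{P}_2$, the estimate for $\delta_2$ would need to be revisited.
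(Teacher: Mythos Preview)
Your proof is correct. The approach is close in spirit to the paper's but uses a sharper decomposition: the paper splits $P_3(\mathcal{H}_1)$ and $P_3(\mathcal{H}_2)$ into paths inside $\mathcal{H}_3$, paths inside $\mathcal{P}_1$ (resp.\ $\mathcal{P}_1'$), and ``crossing'' paths, then bounds the crossing part case by case on $|E(\mathcal{P}_1)|$; you instead split only by whether the $3$-path contains the single moved edge, which immediately cancels everything except the neighbourhood comparison $\delta_1-\delta_2$. Your route is a bit cleaner because it yields exact counts and avoids the case split on $|E(\mathcal{P}_1)|$.

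Two small points to tighten. First, when $|E(\mathcal{P}_2)|=1$ the edge $g$ does have a neighbour besides $e_1'$, namely the cycle edge through $v_2$; your phrasing suggests $\delta_2=0$ in that case, but in fact $\delta_2=1$ there as well, so the bound $\delta_2\le 1$ still holds. Second, your closing worry about $w_1$ being an ``interior'' pendent vertex of $\mathcal{P}_2$ cannot occur: by the paper's convention a pendent vertex lies in a pendent edge, and the only pendent edge of $\mathcal{P}_2$ inside $\mathcal{H}_1$ is its terminal edge, so $w_1\in g$ is forced.
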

\begin{proof}

Let $\mathcal{H}_{3}=C^{(m)}_{e}(v_{2},\ldots,v_{p})\bigodot(\mathcal{P}_{2}(u_{2}),\ldots,\mathcal{P}_{p}(u_{p}))$ and $\mathcal{P}_{1}'=\mathcal{P}_{1}-e_{1}+(e_{1}\setminus\{u_{1}\})\bigcup\{w_{1}\}$.
So $P_{3}(\mathcal{H}_{1})=P_{3}(\mathcal{H}_{3})+P_{3}(\mathcal{P}_{1})+P_{\mathcal{H}_{1}}$ and $P_{3}(\mathcal{H}_{2})=P_{3}(\mathcal{H}_{3})+P_{3}(\mathcal{P}_{1}' )+P_{\mathcal{H}_{2}}$,
where $P_{\mathcal{H}_{1}}~(P_{\mathcal{H}_{2}})$ is the set of all the sub-hyperpaths with $3$ edges of $\mathcal{H}_{1}(\mathcal{H}_{2})$, each of them contains both at least one edge in $E(\mathcal{H}_{3})$ and at least one edge in $E(\mathcal{P}_{1})~(E(\mathcal{P}_{1}')).$ We have $|E(\mathcal{P}_{1})|=|E(\mathcal{P}_{1}')|$ and $N_{\mathcal{P}_{1}'}(P_{3}^{(m)})=N_{\mathcal{P}_{1}}(P_{3}^{(m)}) $.

If  $|E(\mathcal{P}_{1})|=1$, since $p\geq2$, in $P_{\mathcal{H}_{1}}$
there are 2 hyperpaths  at least which contain $e_{1}$ and two edges in $E(\mathcal{H}_{3})$. In $P_{\mathcal{H}_{2}}$ there is a  hyperpath which contain $(e_{1}\setminus\{u_{1}\})\bigcup\{w_{1}\}$ and two edges in $E(\mathcal{H}_{3})$.
Therefore, we have $|P_{\mathcal{H}_{1}}|-|P_{\mathcal{H}_{2}}|\geq1$. Hence, $N_{\mathcal{H}_{1}}(P_{3}^{(m)})-N_{\mathcal{H}_{2}}(P_{3}^{(m)})\geq1$. So, $N_{\mathcal{H}_{2}}(P_{3}^{(m)})<N_{\mathcal{H}_{1}}(P_{3}^{(m)})$.

If $|E(\mathcal{P}_{1})|\geq2$, since $p\geq2$,
 in $P_{\mathcal{H}_{1}}$ there are 2 hyperpaths  at least which contain $e_{1}$ and two edges in $E(\mathcal{H}_{3})$ and there is a hyperpath which contain two edges in $E(\mathcal{P}_{1})$ and an edge in $E(\mathcal{H}_{3})$.
In $P_{\mathcal{H}_{2}}$ there is a  hyperpath which contain $(e_{1}\setminus\{u_{1}\})\bigcup\{w_{1}\}$ and two edges in $E(\mathcal{H}_{3})$ and there is a hyperpath which contain two edges in $E(\mathcal{P}_{1}')$ and an edge in $E(\mathcal{H}_{3})$.
Therefore, we have $|P_{\mathcal{H}_{1}}|-|P_{\mathcal{H}_{2}}|\geq1$. Hence, $N_{\mathcal{H}_{1}}(P_{3}^{(m)})-N_{\mathcal{H}_{2}}(P_{3}^{(m)})\geq1$. So, $N_{\mathcal{H}_{2}}(P_{3}^{(m)})<N_{\mathcal{H}_{1}}(P_{3}^{(m)})$.

\end{proof}


Let $E_{ef}^{m}$ be the linear unicyclic $m$-uniform  hypergraph obtained by the coalescence of $C^{(m)}_{e}$ at one of its core vertices with $P^{(m)}_{f}$ at one of its pendent vertices.
The following theorem gives the first hypergraph in an $S$-order of all linear unicyclic $m$-uniform
hypergraphs with given girth.
\begin{thm}
For $m\geq 3$, in an $S$-order of $\textbf{U}^{m}_{ef}$ the first hypergraph is $E_{ef}^{m}$.
\end{thm}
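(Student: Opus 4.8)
The plan is to mirror the argument that identified the first hypertree, now localized inside the family $\textbf{U}$. First I would invoke Theorem \ref{sp10} to cut down the search space: since $\textbf{U}\prec_{s}\textbf{U}^{m}_{ef}\setminus\textbf{U}$, the first hypergraph of $\textbf{U}^{m}_{ef}$ in the $S$-order must already lie in $\textbf{U}$, so it suffices to locate the minimum of $\textbf{U}$ itself.

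Next I would show that every significant spectral moment up through $\mathrm{S}_{2m}$ is constant on $\textbf{U}$. Every member of $\textbf{U}^{m}_{ef}$ has $n=(e+f)(m-1)$ vertices and $q=e+f$ edges, so by Lemma \ref{L2} together with Theorem \ref{sp8} the moments $\mathrm{S}_{0},\dots,\mathrm{S}_{2m-1}$ agree on the whole family (all non-multiples of $m$ vanish and $\mathrm{S}_{m}$ is fixed by the edge count). For the members of $\textbf{U}$ every vertex has degree $1$ or $2$; writing $n_{1},n_{2}$ for the numbers of such vertices, the relations $n_{1}+n_{2}=(e+f)(m-1)$ and $n_{1}+2n_{2}=m(e+f)$ force $n_{2}=e+f$, so the Zagreb index $M(\mathcal{U})=(m+2)(e+f)$ is the same throughout $\textbf{U}$. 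Via $N_{\mathcal{U}}(P_{2}^{(m)})=\tfrac12 M(\mathcal{U})-\tfrac{m(e+f)}{2}$ and Corollary \ref{w3}, this makes $\mathrm{S}_{2m}$ constant on $\textbf{U}$ as well.

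Hence the first moment that can separate members of $\textbf{U}$ is $\mathrm{S}_{3m}$. Inspecting Corollary \ref{sp11}, the terms $N_{\mathcal{U}}(P_{1}^{(m)})=e+f$ and $N_{\mathcal{U}}(P_{2}^{(m)})$ are already constant, $N_{\mathcal{U}}(S_{3}^{(m)})=0$ because no vertex reaches degree $3$, and the extra girth-$3$ summand depends only on $|V(\mathcal{U})|$ and $m$; therefore $\mathrm{S}_{3m}$ is a strictly increasing affine function of $N_{\mathcal{U}}(P_{3}^{(m)})$ on $\textbf{U}$. Minimizing the $S$-order thus reduces to minimizing the number of sub-hyperpaths with three edges.

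Finally I would run the edge-moving reduction. Each hypergraph of $\textbf{U}$ is the hypercycle $C_{e}^{(m)}$ carrying hyperpaths attached at $p$ distinct core vertices, at most one per vertex, and it equals $E_{ef}^{m}$ exactly when $p=1$. Whenever $p\ge 2$, Transformation 5 applies, and by Lemma \ref{zsy2} relocating one attached path onto the tip of another strictly decreases $N_{\mathcal{U}}(P_{3}^{(m)})$ while keeping the hypergraph connected, unicyclic, and inside $\textbf{U}$ (the vacated core vertex drops to degree $1$ and the former tip rises to degree $2$). Repeating this step merges all attachments into a single hyperpath of $f$ edges hanging from one core vertex, namely $E_{ef}^{m}$, so every other member of $\textbf{U}$ has strictly larger $N_{\mathcal{U}}(P_{3}^{(m)})$ and hence strictly larger $\mathrm{S}_{3m}$. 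I expect the main obstacle to be the bookkeeping behind this last step: verifying that a single Transformation 5 move indeed transports the entire path (because the reattached edge $(e_{1}\setminus\{u_{1}\})\cup\{w_{1}\}$ retains the vertex joining it to the remainder of $\mathcal{P}_{1}$), that the class $\textbf{U}$ is preserved at every stage, and that the process terminates precisely at $E_{ef}^{m}$ rather than at some other single-path configuration, so that $E_{ef}^{m}$ is the strict and unique minimizer.
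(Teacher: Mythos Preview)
Your reduction to $\textbf{U}$ and the analysis that the first discriminating moment is $\mathrm{S}_{3m}$, depending only on $N_{\mathcal{U}}(P_{3}^{(m)})$, are correct and match the paper. The gap is in your structural description of $\textbf{U}$. You assert that every member of $\textbf{U}$ is the hypercycle $C_{e}^{(m)}$ with \emph{hyperpaths} attached at distinct core vertices, but for $m\ge 3$ this is false: a binary $m$-uniform hypertree need not be a hyperpath. A single edge has $m\ge 3$ vertices, and the degree bound $\le 2$ still allows several of those vertices to carry a further edge, so the attached piece can branch (for instance, with $m=3$ the tree on edges $\{a,b,c\},\{b,d,e\},\{c,f,g\}$ is binary but not a path). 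Thus $\textbf{U}$ genuinely contains hypergraphs to which Transformation~5 does not directly apply, since that transformation is only defined when each $\mathcal{P}_{i}$ is a hyperpath.

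The paper closes exactly this gap with an extra stage: for a general $\mathcal{H}\in\textbf{U}$ written as $C_{e}^{(m)}(v_{1},\ldots,v_{p})\bigodot(\mathcal{T}_{1}(u_{1}),\ldots,\mathcal{T}_{p}(u_{p}))$ with binary hypertrees $\mathcal{T}_{i}$, it first uses Lemma~\ref{z1} repeatedly (at edges of the $\mathcal{T}_{i}$ farthest from the root that still have three degree-$2$ vertices) to straighten each $\mathcal{T}_{i}$ into a hyperpath, strictly decreasing $N(P_{3}^{(m)})$ at every step while staying inside $\textbf{U}$. Only after this does it invoke Transformation~5 and Lemma~\ref{zsy2} to merge the resulting hyperpaths into the single pendant path of $E_{ef}^{m}$. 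Your argument needs this preliminary ``straightening'' step (or an equivalent device) before the Transformation~5 phase can begin.
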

\begin{proof}


In an $S$-order of $\textbf{U}^{m}_{ef}$,  by Theorem \ref{sp10}, the first hypergraph is in  $\textbf{U}$.
Since the spectral moments $\mathrm{S}_{0},\mathrm{S}_{1},\ldots,\mathrm{S}_{3m-1}$ are the same in $\textbf{U}$, the first significant spectral moment is the $3m$th. By Corollary \ref{sp11}, $\mathrm{S}_{3m}$ is determined by the number of $S_{3}^{(m)}$ and $P_{3}^{(m)}$. For any $\mathcal{H}\in \textbf{U}$, $N_{\mathcal{H}}(S_{3}^{(m)})=0$.

Let $\mathcal{T}_{1},\ldots,\mathcal{T}_{p}$ be pairwise disjoint binary $m$-uniform hypertrees, $u_{i}$ be a pendent vertex of $\mathcal{T}_{i}$ for each $i\in[p]$ and $v_{1},\ldots,v_{p}$ be core vertices of $C^{(m)}_{e}$, where $1\leq p\leq e(m-2)$ and $\sum_{i=1}^{p}|E(\mathcal{T}_{i})|=f$.
For any $\mathcal{H}=C^{(m)}_{e}(v_{1},\ldots,v_{p})\bigodot(\mathcal{T}_{1}(u_{1}),\ldots,\mathcal{T}_{p}(u_{p}))\in \textbf{U}$,
 let $e(\mathcal{H})$ denote the set of all edges of $\mathcal{H}-E(C^{(m)}_{e})$ that contain at least 3 vertices whose degree is equal to 2.
 Let the vertex $u_{i}$ as a root in $\mathcal{T}_{i}$.
We can repeatedly apply the transformation from Lemma \ref{z1} at any two vertices $u, v\in e\in e(\mathcal{H})$ with largest distance from the root in every hypertree
$\mathcal{T}_{i}$ and $d_{u}=d_{v}=2$, as long as $\mathcal{T}_{i}$ does not become a hyperpath. By Lemma \ref{z1}, each application of this transformation strictly decreases the number of sub-hyperpaths with $3$ edges.


When all hypertrees $\mathcal{T}_{1},\ldots,\mathcal{T}_{p}$  turn into hyperpaths, we can repeatedly apply the transformation 5,
as long as there exist at least two hyperpaths of length at least one,
By Lemma \ref{zsy2}, each application of transformation 5 strictly decreases the number of sub-hyperpaths with $3$ edges.
In the end of this process, we arrive at the $E_{ef}^{m}$.
\end{proof}

\vspace{3mm}

\noindent
\textbf{Acknowledgments}
\vspace{3mm}
\noindent

This work is supported by the National Natural Science Foundation of China (No.
11801115, No. 12071097, No. 12042103 and No. 12242105), the Natural Science Foundation of the
Heilongjiang Province (No. QC2018002) and the Fundamental Research Funds for
the Central Universities.

\section*{References}
\bibliographystyle{unsrt}
\bibliography{pbib}
\end{spacing}
\end{document}